\def\captionof#1#2{{\def\@captype{#1}#2}}
\newcounter{tablegroup}
\newcounter{subtable}[tablegroup]
\newtheorem{thm}{Theorem}[section]
\newtheorem{cor}[thm]{Corollary}
\newtheorem{lem}[thm]{Lemma}
\newtheorem{prop}[thm]{Proposition}
\newtheorem{rem}[thm]{\bf Remark}
\newtheorem{exe}[thm]{\bf Example}
\newtheorem{Que}[thm]{Question}
\numberwithin{equation}{section}
\newcommand{\eps}{\varepsilon}
\begin{document}
\title[Dynamics on Hyperspace of pointwise periodic homeomorphisms]
{Dynamics on Hyperspace of Pointwise Periodic Homeomorphisms}

\author{Issam Naghmouchi}

\address{ Issam Naghmouchi, University of Carthage, Faculty
of Sciences of Bizerte, Department of Mathematics,
Jarzouna, 7021, Tunisia.}
 \email{issam.naghmouchi@fsb.rnu.tn and issam.naghmouchi@fsb.ucar.tn}

\subjclass[2010]{37B02, 37B05, 37B40, 37E15, 37B25}

\keywords{Pointwise periodic, Hyperspace, Topological entropy, Almost periodic, Uniformly recurrent, Li-Yorke chaos, $\omega$-limit set, equicontinuity.}

\begin{abstract} In this paper, we first prove that the topological entropy of induced map of any distal homeomorphism of a compact metric space  is null. Then we  consider induced map $2^f$ of an arbitrary pointwise periodic homeomorphism $f:X\to X$ of a compact metric space $X$, we show that the set of almost periodic points coincides with the set of uniformly recurrent points, i.e. $AP(2^f)=UR(2^f)$. Furthermore, we prove that inside any infinite $\omega$-limit set $\omega_{2^f}(A)$ there is a unique minimal set and this minimal set is an adding machine. As a consequence, $(2^X,2^f)$ has no Devaney chaotic subsystems. In contrast to these rigidity properties, we obtain some results with chaotic flavor. In fact, we prove the following dichotomy, the hyperspace system $(2^X,2^f)$ is either  equicontinuous or choatic with respect to Li-Yorke chaos and $\omega$-chaos. It is shown that the later case occurs if and only if $R(2^f)\setminus AP(2^f)\neq\emptyset$.  This enables us to provide simple examples of pointwise periodic homeomorphisms with chaotic induced systems.
\end{abstract}
\maketitle

\section{\bf Introduction}

 The dynamics on  hyperspace of a given dynamical system, consists to study the orbit of a collection of points (a closed set) simultaneously instead of a single point. Recently, extensive literature has been developed on the study of dynamics on hyperspaces. Authors mainly investigate which properties of a dynamical system are preserved or reflected in its induced hyperspace system. As pointed out by Kwietniak and Oprocha  in \cite{kop}, the first study in this direction dates back to 1975 and was done by Bauer and Sigmund in \cite{4}. In this paper, we investigate the dynamical behavior of hyperspace systems generated by pointwise periodic homeomorphisms. Periodicity could be interpreted as the most regular type of behavior. One may then ask: how complex can the dynamics on hyperspace become if every point in the original system is periodic?

The answer of this question is known in a few specific cases. For instance, according to \cite{aus} the hyperspace system is always equicontinuous if the phase space is a zero-dimensional compact space (see \cite{liop} for further results on the countable case). For higher dimension,  Kolev and Pérouème \cite{kol} proved that recurrent homeomorphisms of a compact surface with negative Euler characteristic are even periodic and so they are equicontinuous as well as their induced hyperspace maps. However, to the best of our knowledge, no systematic study has been conducted in the general setting of compact metric spaces. In this paper, we aim to investigate this question in the broader context. We will show that for a pointwise periodic homeomorphism of a compact metric space, the hyperspace systems exhibit several rigidity properties, including zero topological entropy. At the same time, the non-equicontinuity in the original system is sufficient to produce a variety of chaotic behaviors in the corresponding hyperspace system.

\medskip

This paper is organized as follow: In this section, we introduce the basic notations, definitions, and preliminary results that will be used in subsequent sections. In Section 2, we establish that the topological entropy $h_{top}(2^f)$ is zero for any distal homeomorphism $f:X\to X$ of a compact metric space $X$. Section 3 is devoted to the study of pointwise periodic closed sets with the weak incompressibility property. In sections 4, 5 and 6, we will consider a pointwise periodic homeomorphism $f:X\to X$ of a compact metric space $X$. In section 4,  we mainly prove that for an almost periodic point $A$ in the hyperspace $2^X$, $\cup_{n\in\mathbb{N}f^n(A)}$ is closed in $X$ and $A$ is uniformly recurrent for $2^f$.  We prove in section 5, that inside any infinite $\omega$-limit set of $2^f$ there is a unique minimal set and this minimal set is an adding machine. Finally in section 6, we establish the following dichotomy result: $(2^X,2^f)$ is either  equicontinuous or choatic with respect Li-Yorke chaos and $\omega$-chaos. The later case occurs if and only if $R(2^f)\setminus AP(2^f)\neq\emptyset$. We finish this section by giving a non-equicontinuous pointwise periodic homeomorphism $F:Y\to Y$ for which we construct a set $D\in 2^Y\setminus R(2^F)$  and a set $C\in R(2^F)\setminus AP(2^F)$.

\medskip

 Let $X$ be a compact metric space, the closure (respectively the interior) of a subset $A$ of $X$ is denoted by $\overline{A}$ (respectively $int_X(A)$). For a subset $A$ of $X$ and $\eps>0$, we denote by $B(A,\eps)=\cup_{x\in A}B(x,\eps)$ where $B(x,\eps)$ is the open ball in $X$ with center $x$ and radius $\eps$.
We denote by $2^X$ the hyperspace of all nonempty closed subsets of $X$. For any two subsets $A$ and $B$ of $X$, we denote by $d(A,B)=inf_{x\in A,y\in B}d(x,y)$ and $d(x,A)=d(\{x\},A)$. The Hausdorff metric $d_H$ on $2^X$ is defined as follows : Let $A,B\in 2^X$
$$d_H(A,B)=max\{sup_{x\in A}d(x,B),sup_{y\in B}d(y,A)\}.$$\\
This defines a distance on $2^X$ (see \cite{il}). With this distance, $2^X$ is a compact metric space (\cite{il}).

A dynamical system is a pair $(X, f )$, where $X$ is a compact metric space and $f : X\to X$ is a continuous map. Let $(X, f )$ be a dynamical system then the induced map $2^f:2^X\to 2^X$ is defined by $2^f(A)=f(A)$ for every $A\in 2^X$. It is well known that $2^f$ is continuous with respect to the Hausdorff metric (see, for example, Nadler 1978 \cite{Nadler1978}). The orbit of a point $x\in X$ is the set $\{f^n(x): \ n\in\mathbb{N}\}$. A subset $A$ of $X$ is said to be $f$-invariant (resp. strongly $f$-invariant) if $f(A)\subset A$ (resp. $f(A)=A$). A periodic cycle is a finite collection of closed sets $A_1,\dots,A_N$ such that $f(A_i)=A_{i \ mod(N)}$ for any $i$. A subset $L$ of $X$ has the \textit{weak incompressibility property} if $F \cap\overline{ f(L\ F)} \neq \emptyset $ whenever $F$ is a proper, nonempty and closed subset of $L$. The $\omega$-limit set of a given point $x\in X$ is defined as follow:
$$\omega_f(x)=\cap_{n\in\mathbb{N}}\overline{\{f^k(x): k\geq n\}}$$

$$=\{y\in X: \exists n_1<n_2<\cdots: \lim_{i \rightarrow +\infty} d(f^{n_i}(x),y)=0\}.$$

An $\omega$-limit set $\omega_{f}(x)$ is always a non-empty, closed, strongly
invariant set, i.e. $f(\omega_{f}(x))= \omega_{f}(x)$ (Lemma 2, Chapter IV in \cite{blo}) and has the weak incompressibility property (see Lemma 3, Chapter IV in \cite{blo}). The weak incompressibility property was observed for the first time by Sarkovskii \cite{sar} for $\omega$-limit sets of interval maps.

A finite sequence $(x_i)_{i=0} ^N$ is said to be an \emph{$\epsilon$-chain} if $d(f(x_i), x_{i+1})< \epsilon$ for all indices $i<N$. A set $A$ is \emph{internally chain transitive} if for any pair of points $a,b \in A$ and any $\epsilon>0$ there exists a finite $\epsilon$-chain $(x_i)_{i=0} ^N$ in $A$ with $x_0=a$, $x_N=b$ and $N\geq 1$. Notice that Meddaugh and Raines \cite{med} established that the family of chain internal transitive closed subsets is closed in $2^X$. Moreover, Barwell \textit{et al} \cite{bar} proved that, for closed subsets, internal chain transitivity is equivalent to weak incompressibility.

 A pair $(x,y)\in X\times X$ is called \textit{proximal} if $\underset{n\to \infty}\liminf\ d(f^n(x),f^n(y))=0$ otherwise it is called \textit{distal}. If $\underset{n\to \infty}\limsup\ d(f^n(x),f^n(y))=0$, then $(x,y)$ is called \textit{asymptotic}. The stable set $W^s(x)$ of a point $x$ is defined by $W^s(x):=\{y\in X:$  $ (x,y)$ is an asymptotic pair $\}$. Notice that for any $x\in X$ and for any minimal set $M\subset \omega_f(x)$, there is $y\in M$ such that $(x,y)$ is proximal (see \cite{onli}).

A point $x$ in $X$ is called
\begin{itemize}
\item \textit{fixed} if $f(x)=x$;
\item \textit{periodic} if $f^n (x)=x$ for some $n\in \mathbb{N}$;
\item \textit{almost periodic}  if for any neighborhood $U$ of $x$ there exists $N\in\mathbb{N}$ such that $\{f^{n+i}(x),\ i=0,1,\cdots,N\} \cap U $ for all $n\in\mathbb{Z}_{+}$,
\item  \textit{recurrent} if $x\in \omega_f(x)$;
\item \textit{uniformly recurrent} if for any neighborhood $U$;
of $x$ there is $N\in\mathbb{N}$ such that $f^{kN}(x)\in U$,  $\forall k\in\mathbb{N}$;
\item  \textit{transitive} if $\omega_f(x)=X$;
\item  \textit{distal} if $(x,y)$ is a distal pair for any $y\in \overline{O_f(x)}\setminus\{x\}$.
\end{itemize}

We denote by $P(f)$, $AP(f)$, $R(f)$  and $UR(f)$ the sets of periodic points, almost periodic points, recurrent points and the uniformly recurrent points of $f$ respectively.  A subset $M$ is called \emph{minimal} if it is non-empty, closed $f$-invariant (\textit{i.e.} $f(A)\subset A$) and there is no proper subset of $M$
having these properties. A subset $M$ of $X$ is called \emph{totally minimal} if it is minimal for every $f^n$.

A dynamical system $(X, f)$ is said to be \textit{(topologically) transitive} for for any non-empty open subsets $U$ and $V$ of $X$ there exists $n\in\mathbb{N}$ such that $f^{-n}(U)\cap V\neq\emptyset$. It is well known that $(X, f)$ is transitive if and only if the set of transitive points is residual in $X$.

 Let $\alpha=(j_1,j_2,\dots)$ be a sequence of integers, where each $j_i\geq 2.$\ Let $\triangle_\alpha$ denote all sequences $(x_1,x_2,\dots)$ where $x_i\in \{0,1\dots,j_i-1\}$ for each $i.$\ We put a metric $d_\alpha$ on $\triangle_\alpha$ given by
$$d_\alpha((x_1,x_2,\dots),(y_1,y_2,\dots))=\displaystyle\sum_{i=1}^{\infty}\frac{\delta(x_i,y_i)}{2^i}$$
 where $\delta$ is the Kronecker symbol. The addition in $\triangle_\alpha$ is defined as follows:
$$(x_1,x_2,\dots)+(y_1,y_2,\dots):=(z_1,z_2,\dots)$$
where $z_1=(x_1+y_1)\ (mod\ j_1$) and $z_2=(x_2+y_2+t_1)\ (mod\ j_2) $, with $t_1=0$ if $x_1+y_1< j_1$ and $t_1=1$ if $x_1+y_1\geq j_1$. So, we carry a one in the second case. Continue adding and carrying in this way for the whole sequence.\\
We define $f_\alpha:\triangle_\alpha\rightarrow \triangle_\alpha$ by
$$f_\alpha(x_1,x_2,\dots)=(x_1,x_2,\dots)+(1,0,0,\dots)$$
We will refer to the system $(\triangle_\alpha, f_\alpha)$ as the \emph{adding machine}.

Let $(X, f )$ and $(Y,g)$ be two dynamical systems. If there is a continuous surjection $\pi: X \to Y$ which intertwines the actions (\textit{i.e.,} $\pi \circ f = g \circ \pi$), then we say that $(Y,g)$ is a factor of $(X, f )$ throughout the factor map $\pi$. If moreover $\pi$ is a homeomorphism then we say that $(X, f )$ and $(Y,g)$ are \textit{(topologically) conjugated}. A closed $f$-invariant subset $A$ of $X$ is called \textit{an adding machine} if $(A,f_{| A})$ is conjugated to an adding machine.

\textit{Variational principle}
Let $(X,f)$ be a dynamical system and lets denote by $h_{top}(f)$ the topological entropy of $f$ and by $h_{\mu}(f)$ the metric entropy of $f$ with respect to an invariant probability measure $\mu$ (see \cite{Walters} for the definitions and for more details). Lets denote by $\mathcal{M}_e(f)$ the set of ergodic invariant measures of $f$. The variational principle runs as follows:
$$h_{top} (f)=Sup\{h_{ \mu}(f): \ \mu\in\mathcal{M}_e(f)\}.$$

\textit{Devaney chaos.} A dynamical system $(X,f)$ is said to be \textit{sensitive} if there is $\delta>0$ such that for any $x\in X$ and for any $\eps>0$, there is $n\in\mathbb{N}$ such that $d(f^n(x),f^n(y))>\delta$ for some $y\in B(x,\delta)$. A \emph{Devaney chaotic} dynamical system $(X, f)$ is any transitive system with dense set of periodic points. For more details on chaos theory see \cite{Li}.

\medskip

\textit{Li-Yorke Chaos.} A pair $(x,y)$ is called a \textit{Li-Yorke pair} if it is proximal but not asymptotic. A dynamical system $(X,f)$ is said to be Li–Yorke chaotic if there exists an uncountable scrambled set of $f$.

\medskip

\textit{Auslander-Yorke chaos.} A dynamical system $(X,f)$ is said to be \textit{Auslander-Yorke chaotic} if it is transitive and sensitive.

\medskip

\textit{$\omega$-Chaos.}
A subset $S$ of $X$ containing at least two points is called an $\omega$-scrambled set for $f$ if for any $x, y \in S$ with
$x\neq y$ the following conditions are fulfilled:
\begin{itemize}
\item[(i)] $\omega_f (x) \setminus \omega_f (y)$ is uncountable,
\item[(ii)]  $\omega_f (x) \cap \omega_f (y) \neq\emptyset$,
\item[(iii)]  $\omega_f (x)$ is not pointwise periodic.
\end{itemize}
A dynamical system $(X,f)$ is said to be \textit{$\omega$-chaotic} if there is an uncountable $\omega$-scrambled set \cite{ll}. Following Theorem 1.4 and
Corollary 2.6 from \cite{NN} the definition of $\omega$-scrambled set is reduced only to conditions (i) and (ii) when
the phase space $X$ is either a completely regular continuum (i.e. every non-degenerate sub-continuum has
non-empty interior) or a zero-dimensional compact space.

\section{\bf Topological entropy of $2^f$}
 In this section we will prove that $h_{top}(2^f)=0$ for any distal homeomorphism $f:X\to X$ of a compact metric space $X$. To achieve this, we will first show that the stable set with respect to $2^f_{|\omega_{2^f}(A)}$ of any recurrent point $A\in 2^X$ reduces to a single point set.This was done using the product recurrence property of distal points: If $x$ is a distal point in a dynamical system $(X,T)$ then for any dynamical system $(Y, S)$ and any recurrent point $y \in Y$,  the pair $(x, y)$ is a recurrent point in the product system $(X \times Y, T \times S)$ (see \cite[Theorem~9.11]{Furst}). Having shown this, we then invoke the variational principle together with the result of Blanchard, Host, and Ruelle (see \cite{blan}), which states that if the metric entropy relative to some ergodic probability measure $\mu$ is positive, then the set of points belonging to a proper asymptotic pair is of measure one with respect to $\mu$.

\medskip
\begin{thm}\label{t1} Let $f:X\to X$ be a distal homeomorphism of a compact metric space $(X,d)$. Then $h_{top}(2^f)=0$.
\end{thm}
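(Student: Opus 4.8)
The plan is to argue by contradiction, using the variational principle together with the Blanchard--Host--Ruelle theorem on asymptotic pairs in positive-entropy systems \cite{blan}. Suppose $h_{top}(2^f)>0$. Since $f$ is a homeomorphism, $2^f$ is a homeomorphism of the compact metric space $(2^X,d_H)$, so the variational principle furnishes an ergodic measure $\mu\in\mathcal{M}_e(2^f)$ with $h_\mu(2^f)>0$. Let $Y=\mathrm{supp}(\mu)$, a closed $2^f$-invariant set on which $\mu$ is fully supported with positive entropy. I would then collect three full-measure conditions on the subsystem $(Y,2^f_{|Y},\mu)$: by the Blanchard--Host--Ruelle theorem, $\mu$-almost every $A\in Y$ admits a proper asymptotic partner $B\in Y$ with $B\neq A$; by the Poincaré recurrence theorem, $\mu$-almost every $A$ is recurrent; and since $\mu$ is ergodic, $\mu$-almost every $A$ is generic, so its forward orbit is dense in $Y$, whence $\omega_{2^f}(A)=Y$. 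Passing to the subsystem supported on $Y$ is precisely the device that guarantees the asymptotic partner $B$ lies in $Y=\omega_{2^f}(A)$, rather than merely somewhere in $2^X$. Intersecting the three sets, I obtain a recurrent point $A$ with $\omega_{2^f}(A)=Y$ possessing a proper asymptotic partner $B\in\omega_{2^f}(A)$.

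\medskip

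The crux is then the following rigidity statement, which I would isolate as a lemma: \emph{for a pointwise uniformly recurrent homeomorphism $f$ and any recurrent point $A\in 2^X$, the stable set of $A$ with respect to $2^f_{|\omega_{2^f}(A)}$ is exactly $\{A\}$}; equivalently, if $B\in\omega_{2^f}(A)$ and $(A,B)$ is asymptotic, then $B=A$. To prove it I would first observe that asymptotic points share their $\omega$-limit set, so $\omega_{2^f}(B)=\omega_{2^f}(A)=:W$; combined with $B\in W$ this shows $B$ is itself recurrent, so $A$ and $B$ are two recurrent points of the strongly invariant set $W$ that are asymptotic under $2^f$. I would then descend from the Hausdorff metric to the points of $X$: the asymptoticity of $(A,B)$ means $\sup_{a\in A}d(f^n a,f^nB)\to 0$ and $\sup_{b\in B}d(f^n b,f^nA)\to 0$. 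The aim is to exploit the pointwise uniform recurrence of the individual points of $A$ and $B$ in $X$ — each returns into any prescribed neighbourhood along a full arithmetic progression of times — together with the recurrence of the \emph{sets} $A$ and $B$ in $2^X$, in order to show that any point of $A$ can be approximated arbitrarily well by $B$, and symmetrically, forcing $d_H(A,B)=0$. Morally, recurrent points of $2^f$ should have odometer-like (equicontinuous) orbit closures on which distality annihilates proper asymptotic pairs, and it is this collapse that I would make precise.

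\medskip

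Granting the lemma, the point $A$ produced in the first paragraph is recurrent, with a proper asymptotic partner $B\in\omega_{2^f}(A)$, so $B$ lies in the stable set of $A$ for $2^f_{|\omega_{2^f}(A)}$ while $B\neq A$, contradicting the lemma. Hence no ergodic measure can have positive metric entropy, and the variational principle gives $h_{top}(2^f)=0$. The main obstacle is squarely the lemma: controlling asymptotic pairs in the hyperspace is delicate because the Hausdorff-asymptotic condition couples all points of the two closed sets simultaneously, whereas the uniform recurrence supplied by the hypothesis is only pointwise and the return periods are not uniform over $X$. The key difficulty will be to synchronise the (point-dependent) arithmetic return times of individual points in $X$ with the recurrence times of the whole sets $A$ and $B$ in $2^X$; equivalently, to show that recurrence of a closed set under $2^f$ upgrades to a regular (uniform) recurrence on which asymptoticity degenerates to equality. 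The remaining ingredients — the variational principle, Poincaré recurrence, genericity of $\mu$-typical points, and the theorem of \cite{blan} — are standard, and the only other subtlety, ensuring that the asymptotic partner sits in the $\omega$-limit set, is handled by working inside the subsystem $(Y,2^f_{|Y})$.
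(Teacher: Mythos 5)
Your global architecture coincides with the paper's: reduce via the variational principle to an ergodic measure $\mu$ of positive metric entropy, pass to the subsystem on $Y=\mathrm{supp}(\mu)$ so that the Blanchard--Host--Ruette asymptotic partner lies in $Y=\omega_{2^f}(A)$, and contradict a rigidity lemma asserting that $W^s(A)\cap\omega_{2^f}(A)=\{A\}$ for every recurrent $A\in 2^X$. This is exactly the paper's Proposition \ref{p1}, and your handling of the measure-theoretic side (Poincar\'e recurrence, genericity of $\mu$-typical points, restriction to the support) is sound.

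The genuine gap is that the rigidity lemma itself is only described, not proved: you name the right ingredients but explicitly leave the synchronisation of return times as an unresolved difficulty. That difficulty is illusory, because the containment is established one point at a time rather than for all points of $A$ and $B$ simultaneously. Suppose $(A,B)$ is asymptotic with $A\in R(2^f)$ and suppose $b\in B\setminus A$; pick $\delta>0$ with $d(b,A)\ge 2\delta$. Uniform recurrence of the \emph{single} point $b$ gives $N$ with $d(b,f^{kN}(b))<\delta$ for all $k$; recurrence of $A$ under $2^f$ passes to $2^{f^N}$, so $d_H(A,f^{kN}(A))<\delta/2$ for infinitely many $k$, while asymptoticity gives $d_H(f^{kN}(A),f^{kN}(B))<\delta/2$ for all large $k$. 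For such $k$ one gets $d(b,A)\le d(b,f^{kN}(b))+d(f^{kN}(b),A)<\delta+d_H(f^{kN}(B),A)<2\delta$, a contradiction. Hence $B\subset A$; since $B$, lying in $\omega_{2^f}(A)$ and sharing its $\omega$-limit set with $A$, is itself recurrent, the symmetric argument gives $A=B$. Each offending point supplies its own return period $N$, and the set-level recurrence of $A$ is simply invoked along the arithmetic progression $kN$; no uniformity of return times over $X$ is required.
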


We will need the following results to prove Theorem 2.1.

\begin{lem}\label{l1} Let $f:X\to X$ be a distal homeomorphism of a compact metric space $X$ and let $A,B\in 2^X$. If $A\in R(2^f)$ and $(A,B)$ is an asymptotic pair of $2^f$ then  $B\subset A$.
\end{lem}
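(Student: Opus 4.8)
The plan is to fix an arbitrary point $b\in B$ and show $b\in A$; since $A$ is closed this yields $B\subseteq A$. First I would exploit the recurrence of $A$: choose $n_i\to\infty$ with $d_H(f^{n_i}(A),A)\to 0$. Because $(A,B)$ is asymptotic, $d_H(f^{n_i}(A),f^{n_i}(B))\to 0$, so $f^{n_i}(B)\to A$ as well. As $f^{n_i}(b)\in f^{n_i}(B)$, after passing to a subsequence we get $f^{n_i}(b)\to c$ for some $c\in A$, and since $n_i\to\infty$ this limit lies in $\omega_f(b)\subseteq M$, where $M:=\overline{\{f^n(b):n\ge 0\}}$.

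The key structural input is that $M$ is an equicontinuous minimal set — an adding machine. This is forced by the hypothesis that $b$ is uniformly recurrent in the present (strong) sense: for every neighbourhood the return times contain a full arithmetic progression, i.e. $b$ is regularly almost periodic, and the orbit closure of such a point is the inverse limit of its periodic orbits under the $f^N$, hence an odometer. Granting this, the family $\{f^n|_M\}$ is equicontinuous and therefore precompact in $C(M,M)$, so after refining the subsequence once more I may assume $f^{n_i}|_M\to g$ uniformly on $M$, where $g$ lies in the compact group $G$ of uniform limits of iterates of $f|_M$. In particular $g$ is a homeomorphism of $M$, $g(b)=\lim_i f^{n_i}(b)=c$, and $g^{-1}\in G$ is itself a uniform limit of positive iterates $g^{m_l}$ of $g$.

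It then remains to transport the recurrence of $A$ onto $M$. Since $f^{n_i}(M)=M$, one has $f^{n_i}(A)\cap M=f^{n_i}(A\cap M)$, and uniform convergence $f^{n_i}|_M\to g$ gives $f^{n_i}(A\cap M)\to g(A\cap M)$ in the hyperspace of $M$. On the other hand $f^{n_i}(A)\to A$, and intersecting with the fixed compact set $M$ is upper semicontinuous, so every limit of points of $f^{n_i}(A)\cap M$ lies in $A\cap M$; comparing the two descriptions yields $g(A\cap M)\subseteq A\cap M$. Iterating this inclusion and passing to the limit along $g^{m_l}\to g^{-1}$ gives $g^{-1}(A\cap M)\subseteq A\cap M$, hence $g(A\cap M)=A\cap M$. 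Finally $c\in A\cap M$ forces $b=g^{-1}(c)\in A\cap M\subseteq A$, as desired.

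I expect the main obstacle to be twofold. First, one must pin down and justify the equicontinuity (odometer) structure of $M$ from the strong uniform-recurrence hypothesis, as everything downstream rests on realising $f^{n_i}|_M$ as converging to an invertible rotation $g$ with $g^{-1}$ approximable by positive powers. Second, care is needed with the Hausdorff limits: the inclusion $g(A\cap M)\subseteq A\cap M$ uses only upper semicontinuity of $C\mapsto C\cap M$ (equality of the Kuratowski limit with $A\cap M$ may fail), so the reverse inclusion must be recovered from the compact-group structure rather than from continuity of intersection. The alignment of the various subsequences (for $f^{n_i}(A)\to A$, $f^{n_i}(b)\to c$, and $f^{n_i}|_M\to g$) is routine once these points are in hand.
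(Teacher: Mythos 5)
Your overall architecture (reduce to the orbit closure $M=\overline{O_f(b)}$, realize the limits $f^{n_i}|_M$ inside the compact group attached to an equicontinuous system, and use that $g^{-1}$ is a limit of positive powers of $g$ to upgrade $g(A\cap M)\subseteq A\cap M$ to equality) is internally coherent, and the Hausdorff-limit manipulations you flag are handled correctly: upper semicontinuity of $C\mapsto C\cap M$ gives one inclusion and the group structure recovers the other. The genuine gap is the step you yourself identify as the crux. You justify the equicontinuity of $M$ by asserting that the orbit closure of a single regularly recurrent point ``is the inverse limit of its periodic orbits under the $f^N$, hence an odometer.'' That is false in general: a Toeplitz sequence is regularly recurrent for the shift, yet its orbit closure is only an almost one-to-one extension of an odometer, typically non-equicontinuous and possibly of positive entropy. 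Since everything downstream (precompactness of $\{f^n|_M\}$ in $C(M,M)$, the existence of $g$ with $g(b)=c$, the approximation of $g^{-1}$ by positive powers) rests on this, the proof as written does not close. It is repairable, but only by invoking the \emph{global} hypothesis rather than the recurrence of $b$ alone: every point of $M$ is uniformly recurrent, $M$ is minimal, and a minimal set all of whose points are regularly recurrent is a periodic orbit or an adding machine by the Block--Keesling characterization (the paper uses exactly this fact in Theorem \ref{t51}). With that substitution your argument becomes a valid, though much heavier, alternative proof.

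For comparison, the paper's proof is entirely elementary and uses the uniform recurrence of only the single point $b$: assuming $b\in B\setminus A$ with $d(b,A)\geq 2\delta$, it picks $N$ with $d(b,f^{kN}(b))<\delta$ for all $k$; recurrence of $A$ under $2^{f^N}$ gives $d_H(A,f^{kN}(A))<\delta/2$ for infinitely many $k$, asymptoticity gives $d_H(f^{kN}(A),f^{kN}(B))<\delta/2$ eventually, and the triangle inequality $d(b,A)\leq d(b,f^{kN}(b))+d(f^{kN}(b),A)<2\delta$ yields the contradiction. No structure theory of $M$ is needed there.
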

\begin{proof}
Let $A,B\in 2^X$ be such that $A\in R(2^f)$ and $(A,B)$ is an asymptotic pair of $2^f$. Suppose that $B\setminus A$ is not empty. Take $b\in B\setminus A$ and choose $\delta>0$ so that $B(b,2\delta)\subset X\setminus A$. As $b$ is a distal point, the pair $(b,A)$ is recurrent under the action of the homeomorphism $(f,2^f)$. Thus for a suitable infinite sequence of positive integers $(n_i)_i$,  the limit in the product space $X\times 2^X$,
$$\lim_{i\to +\infty} (f^{n_i}(b),f^{n_i}(A))=(b,A).$$

However as $(A,B)$ is an asymptotic pair for $2^f$, we have also 

$$\lim_{i\to +\infty} (f^{n_i}(b),f^{n_i}(B))=(b,A).$$

It follows that for $i$ large enough, $f^{n_i}(b)\in B(b,\delta)\cap B(A,\delta)$ which is clearly a contradiction. Consequently, $B\setminus A=\emptyset$ and so $B\subset A$.
\end{proof}

\medskip
\begin{prop}\label{p1} Let $f:X\to X$ be a distal homeomorphism of a compact metric space $X$. Then for any $A\in R(2^f)$,
$$W^s(A)\cap \omega_{2^f}(A)=\{A\}.$$
\end{prop}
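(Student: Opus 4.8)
The plan is to prove the two inclusions of $W^s(A)\cap\omega_{2^f}(A)=\{A\}$ separately, starting with the trivial containment $\{A\}\subset W^s(A)\cap\omega_{2^f}(A)$. Indeed, $(A,A)$ is vacuously an asymptotic pair, so $A\in W^s(A)$, and since $A\in R(2^f)$ we have $A\in\omega_{2^f}(A)$ by definition of recurrence; hence $A$ lies in the intersection. The substance of the proposition is therefore the reverse inclusion.

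For that, I would fix an arbitrary $B\in W^s(A)\cap\omega_{2^f}(A)$ and aim to show $B=A$ by establishing the two set inclusions $B\subset A$ and $A\subset B$. The first is immediate from Lemma \ref{l1}: since $B\in W^s(A)$ means $(A,B)$ is asymptotic and $A\in R(2^f)$, that lemma gives $B\subset A$.

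The heart of the argument is to promote $B$ to a recurrent point of $2^f$, which then permits a second application of Lemma \ref{l1} with the roles of $A$ and $B$ exchanged. To this end I would use $B\in\omega_{2^f}(A)$ to select a sequence $n_1<n_2<\cdots$ with $f^{n_i}(A)\to B$ in the Hausdorff metric. Because $(A,B)$ is asymptotic, $d_H(f^{n_i}(A),f^{n_i}(B))\to 0$, so the triangle inequality forces $f^{n_i}(B)\to B$ as well; hence $B\in\omega_{2^f}(B)$, i.e. $B\in R(2^f)$.

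Finally, since the asymptotic relation is symmetric, $(B,A)$ is also an asymptotic pair, and now $B\in R(2^f)$, so Lemma \ref{l1} applied to $B$ yields $A\subset B$. Combined with $B\subset A$ this gives $B=A$, which completes the reverse inclusion. I expect the only delicate point to be the verification that $B$ is itself recurrent; once that is in hand, the rest is just the symmetry of asymptoticity together with the two applications of Lemma \ref{l1}.
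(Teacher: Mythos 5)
Your proposal is correct and follows essentially the same route as the paper: both arguments promote $B$ to a recurrent point by using that an asymptotic pair shares the same $\omega$-limit set (so $B\in\omega_{2^f}(A)=\omega_{2^f}(B)$), and then invoke Lemma~\ref{l1} to conclude $A=B$. Your version merely spells out the two symmetric applications of the lemma (to get $B\subset A$ and $A\subset B$) that the paper leaves implicit.
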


\begin{proof} Let $A\in R(2^f)$ and suppose that $B\in  W^s(A)$ then $B$ has the same $\omega$-limit set as $A$ under $2^f$. If moreover $B$ belongs to the $\omega_{2^f}(A)$ then it is a recurrent point of $2^f$. By lemma \ref{l1}, $A=B$.
\end{proof}

\begin{proof} [Proof of Theorem \ref{t1}] Let $f:X\to X$ be a distal homeomorphism of a compact metric space $X$. Take $\mu$ an ergodic measure of $2^f$ and denote by $S$ its support. Clearly if $S$ is finite then the metric entropy $h_{\mu}(2^f)=0$. In view of Poincar\'e’s Recurrence Theorem, any isolated point $Z$ of $S$ is periodic and so it has finite orbit with positive $\mu$-measure. As $\mu$ is ergodic, its support $S$ should be reduced to the orbit of $Z$ if $S$ contains an isolated point $Z$ and so it is finite in this case. Therefore, if $S$ is infinite then it has no isolated point. Hence the set of transitive points for the dynamical system $(S,2^f_{\mid S})$ has full measure with respect to $\mu$ (see Theorem 5.16 in \cite{Walters}). Thus for $\mu$-a.e. $A\in 2^X$, $\omega_{2^f}(A)=S$ and $A\in S$. Consequently,  for $\mu$-a.e.
 $A\in 2^X$, $A\in R(2^f)$ and $\omega_{2^f}(A)=S$ thus $W^s(A)\cap S=\{A\}$ by Proposition \ref{p1}. According to Blanchard, Host and Ruette's result in \cite{blan}, $h_{\mu}(2^f)=h_{\mu}(2^f_{|S})=0$. Finally the topological entropy $h_{top}(2^f)$ vanishes by the variational principle Theorem (see Corollary 8.6.1 in \cite{Walters}).
\end{proof}

\begin{rem}\rm{
\begin{itemize}

\item[(i)] The assumption of pointwise distality in Theorem 2.1 is essential. Indeed, there exist well-known examples of minimal Toeplitz subshifts with positive entropy that admit a generic set of distal points. Moreover, this result cannot be strengthened by passing to sequence topological entropy. Since as shown by Qiu and Zhao in \cite{Qiu}, non-equicontinuous distal systems must have infinite sequence topological entropy.

\item[(ii)] In \cite{jw}, we provided an example of a pointwise periodic rational curve homeomorphism $F$ and we claimed that $h_{top}(2^F)=+\infty$ (see Example 4.9 in  \cite{jw}). It is worth noticing that the entropy of $2^F$ was incorrectly computed. In fact according to the above result, it vanishes. However, as we will see later in Section 5, the induced map $2^F$ of this particular example still exhibits chaotic features, such as Li-Yorke chaos.
\end{itemize}

}
\end{rem}

\section{\bf The structure of pointwise periodic invariant closed subset with the weak incompressibility property}
In this section, we will only consider a continuous map $f: X\to X$. By adopting basically the same ideas as in \cite{NN}, we prove that every closed,$f$-invariant and pointwise periodic subset of $X$ that additionally satisfies the weak incompressibility condition has only finitely many connected components, which together form a periodic cycle. This constitutes an extension of Theorem 1.1 in \cite{NN}, which originally addressed only pointwise periodic $\omega$-limit sets. Notice that the class of closed, pointwise periodic, invariant subsets with the weak incompressibility property may be strictly larger than the class of pointwise periodic $\omega$-limit sets. For example, the set $S^1\times \{0\}$ in Example 5.6, satisfies the weak incompressibility property, yet it does not arise as an $\omega$-limit set. As a consequence, if one takes a convergent sequence in $2^X$ of internally chain transitive closed sets so that its limit is pointwise periodic, then this limit necessarily has finitely many connected components which together form a periodic cycle (see Corollary \ref{crucial lemma}).

\begin{thm}\label{t2} Let $f:X\to X$ be continuous map of a compact metric space $X$ and let  $A$ be a closed $f$-invariant subset of $X$ composed uniquely by periodic points. If $A$ has the weak incompressibility property then it has finitely many connected component, say $C_0,\dots,C_{N-1}$ that form a periodic cycle of period $N$.
\end{thm}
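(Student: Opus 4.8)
The plan is to reduce the problem to the space of connected components of $A$. Let $\widehat{A}$ denote the quotient of $A$ by the relation identifying two points lying in the same connected component; since $A$ is compact its components form an upper semicontinuous decomposition, so $\widehat{A}$ is a zero-dimensional (totally disconnected) compact metric space, with quotient map $q:A\to\widehat{A}$. Before passing to $\widehat{A}$, I would first record that $f|_A$ is a homeomorphism of $A$. Surjectivity is immediate: if $x\in A$ has period $p$ then $x=f(f^{p-1}(x))\in f(A)$, so $f(A)=A$. For injectivity, suppose $f(x)=f(y)=:z$ with $x,y\in A$; then $z$ is periodic and, because every point is periodic, both $x$ and $y$ must lie in the unique periodic orbit of $z$, on which $f$ acts as a cyclic permutation, whence $x=y$. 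Being a continuous bijection of a compact metric space, $f|_A$ is a homeomorphism, so it permutes the connected components bijectively and induces a homeomorphism $\widehat{f}$ of $\widehat{A}$ with $q\circ f=\widehat{f}\circ q$. Each component $C$ is $\widehat{f}$-periodic: a point of $C$ of period $p$ gives $f^p(C)\subseteq C$, and since $f^p$ carries components to components this forces $f^p(C)=C$.

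Next I would transport the hypothesis to $\widehat{A}$. By Barwell \emph{et al.}, weak incompressibility of the closed set $A$ is equivalent to internal chain transitivity of $(A,f)$. Since $q$ is uniformly continuous and intertwines $f$ with $\widehat{f}$, every sufficiently fine $\varepsilon'$-chain in $A$ projects to an $\varepsilon$-chain in $\widehat{A}$, so $(\widehat{A},\widehat{f})$ is internally chain transitive as well. The key consequence I would extract is that $(\widehat{A},\widehat{f})$ has no proper nonempty clopen invariant subset: if $W$ were clopen, $\widehat{f}$-invariant, proper and nonempty, then $\widehat{A}\setminus W$ is clopen and at a positive distance $\delta$ from $W$, and for $\varepsilon<\delta$ no $\varepsilon$-chain can ever leave $W$, contradicting chain transitivity.

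With these tools in place, the heart of the argument is a local-to-global step via Baire category. Writing $\widehat{A}=\bigcup_{n\geq 1}\mathrm{Fix}(\widehat{f}^{\,n})$ as a countable union of closed sets (each component is periodic), the Baire category theorem yields some $p$ for which $\mathrm{Fix}(\widehat{f}^{\,p})$ has nonempty interior; by zero-dimensionality this interior contains a nonempty clopen set $V$ with $\widehat{f}^{\,p}|_V=\mathrm{id}$. Then $\bigcup_{j=0}^{p-1}\widehat{f}^{\,j}(V)$ is clopen, nonempty and $\widehat{f}$-invariant, hence equals $\widehat{A}$, and it follows that $\widehat{f}^{\,p}=\mathrm{id}$ on all of $\widehat{A}$. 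Finiteness now drops out of a covering argument: for any nonempty clopen $P$ the invariant set $\bigcup_{j=0}^{p-1}\widehat{f}^{\,j}(P)$ is again all of $\widehat{A}$; choosing $P$ of small enough diameter that each of the finitely many images $\widehat{f}^{\,j}(P)$ has diameter $<\varepsilon$ covers $\widehat{A}$ by $p$ sets of diameter $<\varepsilon$, for every $\varepsilon>0$, so $\widehat{A}$ has at most $p$ points and $A$ has finitely many components. Finally, on the finite chain transitive system $(\widehat{A},\widehat{f})$ every sufficiently fine $\varepsilon$-chain is an honest orbit segment, so $\widehat{f}$ is a single cyclic permutation of length $N:=\#\widehat{A}$; pulling back gives components $C_0,\dots,C_{N-1}$ with $f(C_i)=C_{i+1\,(\mathrm{mod}\,N)}$, i.e. a periodic cycle of period $N$.

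I expect the main obstacle to be the promotion from a purely local identity to the global relation $\widehat{f}^{\,p}=\mathrm{id}$, which is exactly where weak incompressibility is indispensable: pointwise periodicity alone yields only local finite order through Baire, and it is the absence of proper clopen invariant sets that spreads this to the whole component space and then, via the covering argument, forces finiteness. Some care is also needed to confirm that the component quotient is genuinely a zero-dimensional compact metric space with continuous induced map and that internal chain transitivity descends through $q$; these points are routine but should be stated explicitly.
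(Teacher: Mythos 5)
Your proof is correct and follows essentially the same route as the paper: collapse the connected components to a zero-dimensional compact quotient, transfer weak incompressibility (equivalently, internal chain transitivity) to the quotient, and use Baire category on $\bigcup_{n}\mathrm{Fix}(\widehat{f}^{\,n})$ to produce a nonempty clopen invariant set, which chain transitivity forces to be the whole space. The only cosmetic difference is the endgame: you first globalize $\widehat{f}^{\,p}=\mathrm{id}$ and deduce finiteness by a covering argument, whereas the paper derives a contradiction directly from the assumption that the quotient is infinite by exhibiting a proper nonempty clopen invariant set.
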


\begin{lem}\label{l2} Let $f:X\to X$ be continuous map and $g:Y\to Y$ be a factor of $f$ throughout the factor map $\pi:X\to Y$. Let $A$ be a closed $f$-invariant subset of $X$. If $A$ has the weak incompressibility property then so does $\pi(A)$.
\end{lem}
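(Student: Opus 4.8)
The plan is to show that the weak incompressibility property transfers along the factor map $\pi$, that is, to verify the defining condition for $\pi(A)$. First I would fix a proper, nonempty, closed subset $F$ of $\pi(A)$ and set $E := A \cap \pi^{-1}(F)$, the preimage of $F$ inside $A$. Since $\pi$ is continuous, $E$ is closed; since $F$ is nonempty and $F \subset \pi(A)$, the set $E$ is nonempty; and since $F$ is a \emph{proper} subset of $\pi(A)$, there is a point $y \in \pi(A) \setminus F$, hence a point $a \in A$ with $\pi(a) = y \notin F$, so $a \in A \setminus E$ and $E$ is a proper subset of $A$. Thus $E$ is admissible in the weak incompressibility condition for $A$, which yields $E \cap \overline{f(A \setminus E)} \neq \emptyset$.

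Next I would push this nonempty intersection forward under $\pi$ and track where the sets land. The key identities are $\pi(E) = \pi(A \cap \pi^{-1}(F)) \subset F$ and, crucially, $\pi(A \setminus E) \subset \pi(A) \setminus F$: if $a \in A \setminus E$ then $\pi(a) \notin F$ by the very definition of $E$, and $\pi(a) \in \pi(A)$, so $\pi(a) \in \pi(A) \setminus F$. Because $\pi$ is continuous and $X$ is compact, $\pi$ is a closed map and in particular $\pi(\overline{S}) = \overline{\pi(S)}$ for any $S$ (one inclusion is continuity, the other uses that $\pi(\overline{S})$ is closed and contains $\pi(S)$). Applying this with $S = f(A \setminus E)$ and invoking the intertwining relation $\pi \circ f = g \circ \pi$, I would compute $\pi(\overline{f(A \setminus E)}) = \overline{\pi(f(A \setminus E))} = \overline{g(\pi(A \setminus E))} \subset \overline{g(\pi(A) \setminus F)}$.

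Finally I would combine these. Since $E \cap \overline{f(A \setminus E)}$ is nonempty, its image under $\pi$ is nonempty, and that image is contained in $\pi(E) \cap \pi(\overline{f(A \setminus E)}) \subset F \cap \overline{g(\pi(A) \setminus F)}$. Hence $F \cap \overline{g(\pi(A) \setminus F)} \neq \emptyset$, which is exactly the weak incompressibility condition for the proper, nonempty, closed subset $F$ of $\pi(A)$; as $F$ was arbitrary, $\pi(A)$ has the weak incompressibility property. I should also note at the outset that $\pi(A)$ is itself a closed $g$-invariant subset of $Y$ (closed by compactness, and $g$-invariant since $g(\pi(A)) = \pi(f(A)) \subset \pi(A)$), so the statement is meaningful.

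I do not anticipate a serious obstacle here; the argument is essentially a diagram chase. The one point requiring care is the set-theoretic bookkeeping around the preimage definition of $E$, specifically verifying that $E$ is a \emph{proper} subset of $A$ and that $\pi(A \setminus E)$ avoids $F$ — both of which hinge on using $E = A \cap \pi^{-1}(F)$ rather than some looser preimage — together with the justification that $\pi$ commutes with closures, which relies on $\pi$ being a closed map via compactness of $X$.
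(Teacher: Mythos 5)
Your proof is correct, but it takes a genuinely different route from the paper's. The paper does not verify the weak incompressibility condition directly: it invokes the equivalence, for closed subsets of a compact metric space, between weak incompressibility and internal chain transitivity (Theorem 2.2 of Barwell--Good--Oprocha--Raines), and then observes that internal chain transitivity passes to factors because the $\pi$-image of an $\eta$-chain is an $\eps$-chain once $\eta$ is chosen by uniform continuity of $\pi$. You instead chase the definition: given a proper nonempty closed $F\subset\pi(A)$ you pull it back to $E=A\cap\pi^{-1}(F)$, check that $E$ is a proper, nonempty, closed subset of $A$, apply weak incompressibility of $A$ to get a point of $E\cap\overline{f(A\setminus E)}$, and push it forward using that $\pi$ is a closed map (hence $\pi(\overline{S})=\overline{\pi(S)}$), the intertwining relation $\pi\circ f=g\circ\pi$, and the inclusion $\pi(A\setminus E)\subset\pi(A)\setminus F$. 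Your argument is self-contained and more elementary --- it needs no metric and no external characterization, only compactness of $X$ and continuity of $\pi$ --- whereas the paper's is shorter on the page but leans on the nontrivial cited equivalence; both are complete. The two bookkeeping points in your version that could go wrong, namely the properness of $E$ in $A$ and the inclusion $\pi(A\setminus E)\subset\pi(A)\setminus F$, are exactly the ones you flag, and both are handled correctly by taking $E$ to be the full preimage of $F$ inside $A$.
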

\begin{proof} According to Theorem 2.2 in \cite{bar}, it suffices to show that $\pi(A)$ is internal chain transitive. Indeed, note that $\pi(A)$ is a closed subset of $Y$. Let $\eps>0$. By uniform continuity of $\pi$, there is $\eta>0$ such that $d(\pi(x),\pi(y))<\eps$ whenever $d(x,y)<\eta$. Recall that $A$ is internal chain transitive thus for any $x,y\in A$ there is an $\eta$-chain for $f$ from $x$ to $y$. So the image of this chain under $\pi$ is clearly an $\eps$-chain for $g$ from $\pi(x)$ to $\pi(y)$.
\end{proof}

\begin{proof} [Proof of Theorem \ref{t2}] Without loss of generality, we may assume that $A=X$ since otherwise we consider $f_{|A}$ instead of $f$. By collapsing all connected components of $X$, we obtain a new quotient space $Y$ and a map $g:Y\to Y$ which is the factor of $f$ throughout the quotient map $\pi:X\to Y$. We shall note first that $P(g)=Y$. By Lemma \ref{l2}, $\pi(X)=Y$ has also the weak incompressibility property for $g$. Recall that $Y$ is totally disconnected and $Y=\cup_{n\in\mathbb{N}}Fix(g^n)$. Thus by Baire's Theorem,  $\cup_{n\in\mathbb{N}}int_{Y}(Fix(g^n))$ is an open dense subset of $Y$. If we assume that $Y$ is infinite then one can choose a small enough non empty clopen subset $O$ of $Y$ included into $Fix(g^N)$ for some $N\in\mathbb{N}$ and such that $O\cup\dots\cup g^{N-1}(O)$ is a proper subset of $Y$. Note that $g^i(O)$ is also a clopen subset of $Y$ for each $i$. Thus $V=O\cup\dots\cup g^{N-1}(O)$ is non-empty proper clopen subset of $Y$ which is further $g$-invariant. This is a contradiction with the weak incompressibility property applied to $V$. It follows that the assumption ¨$Y$ is infinite¨ is impossible. Consequently $\pi(A)=Y$ is a finite set with the incompressibility property so it is exactly a periodic orbit.  This is equivalent to say that $A$ has finitely many connected components that form a periodic cycle.
\end{proof}

\begin{cor}\label{crucial lemma} Let $f:X\to X$ be a continuous map of a compact metric space $(X,d)$ and let $(A_n)_n$ be a sequence of closed sets with the weak incompressibility property that converges in $2^X$ to $C$. If $C\subset P(f)$ then we have the following:
\begin{itemize}
\item[(i)] $C$ has finitely many connected component, say $C_0,\dots,C_{N-1}$, that form a periodic cycle of period $N$.

\item[(ii)] Furthermore, if $A_n=O_f(x_n)\subset P(f)$ for each $n\in\mathbb{N}$ and $\lim_{n\to+\infty}x_n=x\in C_0$ then the limit in $2^X$, $\lim_{n\to +\infty}O_{f^{N}}(x_n)=C_0$.
\end{itemize}
\end{cor}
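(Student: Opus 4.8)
The plan is to deduce (i) directly from Theorem \ref{t2}, once we know that the limit set $C$ is closed, $f$-invariant, pointwise periodic and has the weak incompressibility property, and then to obtain (ii) by a separation-of-components argument.

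For (i): By the equivalence of Barwell \textit{et al.} \cite{bar}, each $A_n$ is internally chain transitive, and since the family of internally chain transitive closed subsets is closed in $2^X$ (Meddaugh--Raines \cite{med}), the limit $C$ is internally chain transitive, hence has the weak incompressibility property. It remains to check that $C$ is $f$-invariant: given $c\in C$ and $\eps>0$, internal chain transitivity furnishes an $\eps$-chain in $C$ from $c$ to $c$, whose first step produces a point of $C$ within $\eps$ of $f(c)$; letting $\eps\to 0$ and using that $C$ is closed yields $f(c)\in C$, so $f(C)\subseteq C$. Since $C\subseteq P(f)$ by hypothesis, Theorem \ref{t2} applies and gives the decomposition $C=C_0\cup\cdots\cup C_{N-1}$ into finitely many connected components forming a periodic cycle of period $N$; we index them so that $f(C_i)=C_{i+1\ (\mathrm{mod}\ N)}$ and $x\in C_0$.

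For (ii): The components $C_0,\dots,C_{N-1}$ are pairwise disjoint compacta, so $3\rho:=\min_{i\neq j}d(C_i,C_j)>0$ and the neighborhoods $B(C_i,\rho)$ are pairwise disjoint. Since $O_f(x_n)\to C$ in the Hausdorff metric, for $n$ large every point of the finite periodic orbit $O_f(x_n)$ lies in exactly one $B(C_i,\rho)$, so we may write $O_f(x_n)=\bigsqcup_{i=0}^{N-1}P_n^{(i)}$ with $P_n^{(i)}:=O_f(x_n)\cap B(C_i,\rho)$; a routine estimate based on the separation $3\rho$ shows that each $P_n^{(i)}$ is nonempty and that $d_H(P_n^{(i)},C_i)\to 0$. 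The key point is to identify $P_n^{(0)}$ with $O_{f^N}(x_n)$: using the uniform continuity of $f$ together with $f(C_i)=C_{i+1\ (\mathrm{mod}\ N)}$ and the invariance of the orbit $O_f(x_n)$, one gets, for $n$ large, $f(P_n^{(i)})\subseteq P_n^{(i+1\ (\mathrm{mod}\ N))}$ for every $i$. Iterating from $x_n\in P_n^{(0)}$ gives $f^m(x_n)\in P_n^{(m\ (\mathrm{mod}\ N))}$, and since the pieces are disjoint this forces $P_n^{(0)}=\{f^{kN}(x_n):k\geq 0\}=O_{f^N}(x_n)$. Combining this with $d_H(P_n^{(0)},C_0)\to 0$ yields $\lim_{n\to\infty}O_{f^N}(x_n)=C_0$.

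The main obstacle is the bookkeeping in (ii): one must choose $\rho$ small enough relative both to the separation of the components and to a modulus of uniform continuity of $f$, so that the pieces $P_n^{(i)}$ are simultaneously well defined, nonempty, convergent to the $C_i$, and permuted by $f$ exactly as the $C_i$ are. Once the inclusions $f(P_n^{(i)})\subseteq P_n^{(i+1\ (\mathrm{mod}\ N))}$ are secured, the identification $P_n^{(0)}=O_{f^N}(x_n)$ falls out of tracking the residue $m\bmod N$ along the orbit, and the convergence is immediate from $d_H(P_n^{(0)},C_0)\to 0$.
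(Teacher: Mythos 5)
Your proof is correct and follows essentially the same route as the paper: part (i) via the Barwell et al. equivalence and the Meddaugh--Raines closedness of internally chain transitive sets followed by Theorem \ref{t2}, and part (ii) by separating the components with disjoint neighborhoods and using uniform continuity of $f$ to show the orbit pieces are cyclically permuted, identifying the piece over $C_0$ with $O_{f^N}(x_n)$. Your explicit verification that internal chain transitivity forces $f(C)\subseteq C$ is a detail the paper leaves implicit, but it does not change the argument.
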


\begin{proof} (i) Note that $A_n$ is internal chain transitive for any $n$ (see Theorem2.2 in \cite{bar}). According to Lemma 2 in \cite{med}, the limit $C$ is internal chain transitive. Again by Theorem2.2 in \cite{bar}, $C$ has the weak incompressibility property. According to Theorem \ref{t2}, $C$ has finitely many connected component say $C_0,\dots,C_{N-1}$ that form a periodic cycle.

(ii) Denote by $C=C_0\cup\dots,C_{N-1}$. Let $\delta>0$ so that $B(C_i,\delta)$, for $i=0,\dots, N$ are pairwise disjoint. Choose $\eps\in (0,\delta]$ such that $f(B(C_i,\eps))\subset B(C_{i+1(N)},\delta)$ for any $i$. For $n$ large enough, $d_H(A_n,C)<\eps$ and $x_n\in B(x,\eps)$. It follows that for $i=0,\dots, N-1$, $O_{f^N}(f^i(x_n))\subset B(C_i,\delta)$ eventually. On the other hand , for $n$ large enough, $C\subset B(A_n,\eps)$, in particular $C_i\subset B(A_n,\eps)$ for $i=0,\dots, N-1$. Hence for $i=0,\dots, N-1$, $d_H(O_{f^N}(f^i(x_n)), C_i)<\delta$ eventually.
\end{proof}
\begin{rem} \rm{By applying Corollary \ref{crucial lemma}, one can easily show the equicontinuity of any pointwise periodic homeomorphism of a zero-dimensional compact metric space. Consequently, the induced hyperspace system is equicontinuous in this case.}
\end{rem}

\section{\bf The set $AP(2^f)$ of almost periodic points and the set $UR(2^f)$ of uniformly recurrent points}
In this section we establish mainly the following: For any  pointwise periodic homeomorphism $f:X\to X$ of a compact metric space $X$, the set of almost periodic points is equal to the set of uniformly recurrent points for the induced map $2^f$. To accomplish this, we shall first establish several auxiliary results, which are themselves of independent interest

\begin{prop}\label{p41} Let $f:X\to X$ be a pointwise periodic homeomorphism of a compact metric space $X$.  If  $A\in AP(2^f)$ then $\cup_{n\in\mathbb{N}}f^n(A)$ is a closed strongly $f$-invariant subset of $X$.
\end{prop}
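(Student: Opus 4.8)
The plan is to treat the two assertions separately, since strong $f$-invariance of $L:=\cup_{n\in\mathbb{N}}f^{n}(A)$ is essentially free and all the content lies in closedness. Because $f$ is a homeomorphism and every point of $X$ is periodic, each orbit $O_f(a)$ with $a\in A$ is a finite cycle on which $f$ acts bijectively, so $L=\cup_{a\in A}O_f(a)$. Concretely, for $a\in A$ of period $p\geq 1$ one has $a=f^{p}(a)\in f(L)$, which gives $A\subseteq L$ and $L\subseteq f(L)\subseteq L$, i.e. $f(L)=L$. Thus the only real task is to prove that $L$ is \emph{closed}, and this is where almost periodicity must be used.

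First I would pass to the orbit closure $M:=\overline{O_{2^{f}}(A)}$ in $2^{X}$. Since $A\in AP(2^{f})$ has syndetic return times, the classical characterization of almost periodic points says that $M$ is a \emph{minimal} set of $(2^{X},2^{f})$; in particular $\overline{O_{2^{f}}(B)}=M$ for every $B\in M$. Put $K:=\cup_{B\in M}B\subseteq X$. Two facts are then immediate. On one hand $K$ is closed, because $M$ is compact in $2^{X}$ and the union of a compact family of closed sets is closed: if $x_{n}\in B_{n}\in M$ and $x_{n}\to x$, extract $B_{n_{j}}\to B\in M$ and use $d(x,B)\leq d(x,x_{n_{j}})+d_{H}(B_{n_{j}},B)\to 0$ to conclude $x\in B\subseteq K$. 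On the other hand, since $f^{n}(A)\in M$ for every $n$, one has $L\subseteq K$. So it suffices to establish the reverse inclusion $K\subseteq L$, which yields $L=K$ and hence closedness.

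The crux, and the step I expect to be the main obstacle, is proving $K\subseteq L$, that is, that every point of every $B\in M$ already lies in some $f^{m}(A)$; here I would exploit pointwise periodicity to pull limit points back into $A$. Fix $y\in B$ with $B\in M$. By minimality $A\in\overline{O_{2^{f}}(B)}$, so there are integers $m_{j}$ with $f^{m_{j}}(B)\to A$ in $d_{H}$. Now $y$ is periodic, so the points $f^{m_{j}}(y)$ range over the finite set $O_f(y)$; passing to a subsequence I may assume $f^{m_{j}}(y)=z$ for a fixed $z\in O_f(y)$. Since $z\in f^{m_{j}}(B)$ and $f^{m_{j}}(B)\to A$, we get $d(z,A)\leq d_{H}(f^{m_{j}}(B),A)\to 0$, so $z\in A$. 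As $z$ and $y$ lie in the same finite cycle, $y=f^{k}(z)$ for some $k\geq 0$, whence $y\in f^{k}(A)\subseteq L$. This gives $K\subseteq L$, so $L=K$ is closed, and together with the first paragraph $L$ is a closed strongly $f$-invariant subset of $X$. The delicate point is precisely this finiteness-of-orbit argument: it is what prevents $L$ from accumulating at a ``new'' periodic point (of possibly unbounded period), which is the only mechanism by which closedness could fail, and minimality of $M$ is exactly what makes the pull-back $A\in\overline{O_{2^{f}}(B)}$ available for an arbitrary $B\in M$.
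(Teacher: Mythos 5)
Your proof is correct, but it follows a genuinely different route from the paper's. The paper argues directly from the syndetic definition of almost periodicity: it takes $y\in\overline{C}$, lets $N$ be its period, decomposes $\overline{C}$ as $\bigcup_{l=0}^{N-1}\overline{\cup_{n}f^{nN}(f^l(A))}$, and runs an $\varepsilon$--$\eta$ argument combining the bounded return gap $i_0$ of $f^l(A)$ under $2^{f^N}$ with the continuity of the first $i_0$ iterates of $f^N$ near the $f^N$-fixed point $y$, to conclude $d(y,f^l(A))<2\varepsilon$ and hence $y\in f^l(A)$. You instead invoke Gottschalk's characterization (almost periodic point $\Leftrightarrow$ minimal orbit closure, a fact the paper itself uses later, in the proof of Theorem 4.4), identify $L=\cup_n f^n(A)$ with $K=\cup_{B\in M}B$ where $M=\overline{O_{2^f}(A)}$, and get closedness for free from compactness of $M$ in $2^X$; the nontrivial inclusion $K\subseteq L$ is handled by pinning down $z=f^{m_j}(y)$ along a subsequence via finiteness of $O_f(y)$. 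Both proofs use pointwise periodicity at the analogous spot (the paper through the period $N$ of the limit point, you through the finiteness of its orbit), but yours is softer and buys something extra: the identity $L=K$ immediately yields Corollary 4.3 of the paper ($\cup_n f^n(A)=\cup_n f^n(B)$ for every $B$ in the orbit closure), since both unions equal $K$ by the symmetry of your argument. The paper's proof, on the other hand, is self-contained at this point and does not rely on the minimality of orbit closures. The only cosmetic point worth noting is the $k=0$ case in your final step ($y=z\in A$), which you have already covered by observing $A\subseteq L$ via $a=f^p(a)$.
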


\begin{proof} Denote by $C=\cup_{n\in\mathbb{N}}f^n(A)$. Clearly, $f(C)\subset C$. Any point $x\in A$ can be written as follows $x=f(f^{n-1}(x))$ where $n$ is the period of $x$ so $x\in f(C)$. Thus $f(C)=C$. Take a point $y$ in the closure of $C$ in $X$  and let $N$ be its period. Note that
$$\overline{C}=\bigcup_{l=0}^{N-1}(\overline{\cup_{n\in\mathbb{N}} f^{nN}(f^l(A))}).$$
Hence $y\in\overline{\cup_{n\in\mathbb{N}} f^{nN}(f^l(A))}$ for some $l\in\{0,\dots,N-1\}$. Recall that $f^l(A)\in AP(2^{f^N})$. Thus for a given $\eps>0$ there is $i_0\in\mathbb{N}$ such that for any $n\in\mathbb{N}$, $d_H(f^l(A),f^{(n+j)N+l}(A))<\eps$ for some $j\in\{0,\dots,i_0-1\}$. From the continuity of $2^{f^N}$, there is $0<\eta<\eps$ such that $f^{jN}(B(y,\eta))\subset B(y,\eps)$ for any $j\in\{0,\dots,i_0-1\}$. As $y\in\overline{\cup_{n\in\mathbb{N}} f^{nN}(f^l(A))}$, there is $x\in f^l(A)$ and $n\in\mathbb{N}$ such that $f^{nN}(x)\in B(y,\eta)$ and so $d(f^{(n+j)N}(x),y)<\eps$ for $j=0,\dots,i_0-1$. But for some $j\in\{o,\dots,i_0-1\}$, $d_H(f^l(A),f^{(n+j)N+l}(A))<\eps$ which implies that $d(y,f^l(A))<2\eps$. This is true for any $\eps>0$ then $y\in\overline{f^l(A)}=f^l(A)$.
 \end{proof}

\begin{rem}\rm{ It is possible for a pointwise periodic homeomorphism to have $R(2^f)\setminus AP(2^f)\neq\emptyset$, see for instance Theorem 6.2 and Example 6.3.}
\end{rem}

\begin{cor}\label{c4} Let $f:X\to X$ be a pointwise periodic homeomorphism of a compact metric space $X$ and let  $A\in AP(2^f)$. Then for any $B\in 2^X$ of the closure in the hyperspace $2^X$ of the orbit of $A$, $$\cup_{n\in\mathbb{N}}f^n(A)=\cup_{n\in\mathbb{N}}f^n(B).$$
\end{cor}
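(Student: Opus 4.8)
The plan is to show that for any $B$ in the closure $\overline{\mathrm{Orb}_{2^f}(A)}$ of the orbit of $A$ in the hyperspace, the set $C_B := \bigcup_{n\in\mathbb{N}} f^n(B)$ equals $C_A := \bigcup_{n\in\mathbb{N}} f^n(A)$. The starting observation is that by Proposition~\ref{p41}, $C_A$ is a closed strongly $f$-invariant subset of $X$. Since $A \subset C_A$ and $C_A$ is closed and $f$-invariant, every iterate $f^n(A)$ is contained in $C_A$, so $C_A$ is a closed set containing the hyperspace orbit $\{f^n(A) : n \in \mathbb{N}\}$ as a collection of subsets. I would first argue that $B$, being a Hausdorff limit of iterates $f^{n_k}(A)$, must satisfy $B \subset C_A$: each $f^{n_k}(A) \subset C_A$, and Hausdorff convergence to $B$ together with closedness of $C_A$ forces $B \subset C_A$. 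Consequently $C_B = \bigcup_n f^n(B) \subset C_A$ as well, since $C_A$ is strongly $f$-invariant.

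For the reverse inclusion $C_A \subset C_B$, the key is almost periodicity. Because $A \in AP(2^f)$, the point $A$ is \emph{uniformly recurrent} for $2^f$, so in particular $A$ itself lies in the closure of its own forward orbit and, more usefully, $A$ lies in the $\omega$-limit set $\omega_{2^f}(B)$ for every $B$ in the orbit closure. The standard fact here is that an almost periodic point generates a minimal set under the closure of its orbit, so $\overline{\mathrm{Orb}_{2^f}(A)}$ is minimal and every point $B$ of it has the same orbit closure as $A$; in particular $A \in \overline{\mathrm{Orb}_{2^f}(B)}$. From $A \in \overline{\mathrm{Orb}_{2^f}(B)}$ I would deduce, exactly as in the first paragraph but with the roles of $A$ and $B$ interchanged, that $A \subset C_B$. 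Since $C_B$ is $f$-invariant (indeed closed: apply Proposition~\ref{p41} to $B$, noting that $B$ is itself almost periodic as a point of a minimal set), we get $f^n(A) \subset C_B$ for all $n$, hence $C_A \subset C_B$.

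Combining the two inclusions yields $C_A = C_B$, which is the claim. The main structural input to organize cleanly is the passage from ``$A \in AP(2^f)$'' to ``$\overline{\mathrm{Orb}_{2^f}(A)}$ is minimal and every $B$ in it satisfies $A \in \overline{\mathrm{Orb}_{2^f}(B)}$''; this is the symmetry that lets me run the containment argument in both directions. I expect the main obstacle to be making rigorous the step $B \subset C_A$ from Hausdorff convergence: one must check that a Hausdorff limit of subsets of a fixed closed set $C_A$ is again a subset of $C_A$. This is elementary (if $b \in B$ and $f^{n_k}(A) \to B$ in $d_H$, then there are points $b_k \in f^{n_k}(A) \subset C_A$ with $b_k \to b$, and closedness of $C_A$ gives $b \in C_A$), but it is the place where the specific metric structure of $2^X$ enters and deserves an explicit sentence. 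Everything else reduces to applying Proposition~\ref{p41} twice together with the minimality of the orbit closure of an almost periodic point.
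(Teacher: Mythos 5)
Your proposal is correct and follows essentially the same route as the paper: apply Proposition~\ref{p41} to get $B\subset\cup_{n}f^n(A)$ (hence $\cup_n f^n(B)\subset\cup_n f^n(A)$ by strong invariance), then use the minimality of the orbit closure of the almost periodic point $A$ to swap the roles of $A$ and $B$ and obtain the reverse inclusion. The paper's proof is just a terser version of the same two steps.
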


\begin{proof} Let $B\in \overline{O_{2^f}(A)}$. It is clear from Proposition \ref{p41}, that $B\subset \cup_{n\in\mathbb{N}}f^n(A)$  and so $ \cup_{n\in\mathbb{N}}f^n(B)\subset \cup_{n\in\mathbb{N}}f^n(A)$. The other inclusion comes from the fact that $B$ is itself an almost periodic point of $2^f$ and $A$ is a point of the closure of its orbit in the hyperspace $2^X$.
\end{proof}

\begin{thm}\label{th4} Let $f:X\to X$ be a pointwise periodic homeomorphism of a compact metric space $X$.  Then $AP(2^f)=UR(2^f)$.
\end{thm}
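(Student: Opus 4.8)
The statement splits into the two inclusions $UR(2^f)\subseteq AP(2^f)$ and $AP(2^f)\subseteq UR(2^f)$. The first is soft and holds for \emph{any} dynamical system: if $A\in UR(2^f)$ and $U$ is a neighbourhood of $A$, the defining integer $N$ gives $(2^f)^{kN}(A)\in U$ for every $k$, so the return-time set of $A$ to $U$ contains the arithmetic progression $N\mathbb{Z}_{>0}$ and in particular has gaps at most $N$; hence the return times are syndetic and $A\in AP(2^f)$. All the work is therefore in the reverse inclusion, which is where pointwise periodicity is genuinely used.

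The plan for $AP(2^f)\subseteq UR(2^f)$ is as follows. Fix $A\in AP(2^f)$; by Birkhoff's theorem its orbit closure $M:=\overline{O_{2^f}(A)}$ is a minimal set of $2^f$, and if $M$ is finite then $A$ is periodic and trivially uniformly recurrent, so I assume $M$ infinite. By Proposition \ref{p41} the set $C:=\cup_{n}f^{n}(A)$ is closed, strongly $f$-invariant and pointwise periodic, and by Corollary \ref{c4} every $B\in M$ satisfies $\cup_n f^n(B)=C$ and $B\subset C$. I would then filter $C$ by the closed sets $F_m:=\mathrm{Fix}(f^m)\cap C$, which are nested under divisibility and exhaust $C$. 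A compactness argument (take a finite $\eps$-net of $A$, each point of finite period, and let $N$ be a common multiple of these periods) shows that for every $\eps>0$ there is $N$ with $d_H(A\cap F_N,A)<\eps$; moreover, since $f(F_N)=F_N$ and $f^N$ fixes $F_N$ pointwise, one checks that $f^{kN}(A)\cap F_N=A\cap F_N$ for all $k$. Because $A\cap F_N\subset f^{kN}(A)$, this already yields the \emph{one-sided} bound $A\subset B\!\left(f^{kN}(A),\eps\right)$ for all $k$ and all multiples $N$ of the chosen common multiple.

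The remaining, and decisive, difficulty is the reverse containment $f^{kN}(A)\subset B(A,\eps)$, that is, controlling the images $f^{kN}(a)$ of the ``high-period'' points $a\in A$ with $p(a)\nmid N$: without equicontinuity of $f$ these images could a priori wander far from $A$, and this is precisely the phenomenon responsible for the recurrent-but-not-almost-periodic sets of Section 6. To rule it out I would invoke the structural results of Section 3: minimality of $M$ together with the closedness of the internally chain transitive sets (Lemma 2 of \cite{med}) and Theorem 2.2 of \cite{bar} should give that $C$ has the weak incompressibility property, so that Theorem \ref{t2} forces $C$ to have finitely many connected components $C_0,\dots,C_{N_0-1}$ forming a periodic cycle. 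Passing to $f^{N_0}$ and to a single (clopen) component reduces the problem to a \emph{connected} carrier, on which the fixed-set filtration above, combined with minimality of $M$ and Corollary \ref{crucial lemma} controlling the limits of the finite orbits $O_{f^{N_0}}(x_n)$, should pin the non-fixed part of $f^{kN}(A)$ back into a shrinking neighbourhood of $A$ and so produce, for each $\eps$, an $N$ with $d_H\!\left(f^{kN}(A),A\right)<\eps$ for all $k$; this is exactly $A\in UR(2^f)$.

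I expect the main obstacle to be precisely this last step: converting the \emph{syndetic} (bounded-gap) recurrence supplied by almost periodicity into recurrence along a single arithmetic progression. The point is that minimality of $M$ — rather than mere recurrence of $A$ — is what prevents the non-fixed part of $A$ from escaping, and the pointwise-periodic profinite structure furnished by the filtration $\{F_m\}$ and the periodic-cycle decomposition is what replaces the (unavailable) equicontinuity of $f$.
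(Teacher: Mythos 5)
There is a genuine gap, and you have located it yourself: the decisive containment $f^{kN}(A)\subset B(A,\eps)$ is never proved. Your fixed-set filtration $F_m=\mathrm{Fix}(f^m)\cap C$ and the finite $\eps$-net argument are correct but only deliver the one-sided estimate $A\subset B(f^{kN}(A),\eps)$, and this estimate is essentially vacuous for the theorem: it holds for \emph{every} closed set in a pointwise periodic system (for instance for the set $C$ of Example 6.3, which is recurrent but not uniformly recurrent), so it cannot by itself distinguish $AP$ from non-$UR$. The proposed repair via Section 3 does not close the gap for two reasons. First, the premise is unjustified: minimality of $M=\overline{O_{2^f}(A)}$ in the hyperspace does not imply that the carrier $C=\cup_n f^n(A)$ is internally chain transitive in $X$ (already $A=\{p,q\}$ two distant fixed points gives a minimal $M=\{A\}$ with $C$ failing chain transitivity), so you cannot invoke Theorem \ref{t2} on $C$ this way. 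Second, even granting a reduction to a connected carrier, the phrase ``should pin the non-fixed part of $f^{kN}(A)$ back into a shrinking neighbourhood of $A$'' is the entire content of the theorem restated, not an argument; no mechanism is given that converts almost periodicity into control of the high-period points along a single arithmetic progression.

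For contrast, the paper's mechanism is different and is worth internalizing: the key lemma is Lemma \ref{p42}, that an \emph{infinite} minimal set of $2^f$ is never totally minimal (proved by a Baire-category argument on $Y=\cup_n f^n(A)$ using Proposition \ref{p41} and Corollary \ref{c4}). Iterating it produces integers $m_k$ and a strictly nested chain of sets $\mathcal{N}_k=\overline{O_{2^{f^{m_k}}}(A)}$, each minimal for $2^{f^{m_k}}$. One then shows that any $B\in\bigcap_k\mathcal{N}_k$ satisfies $B\subset A$ and, by the symmetry supplied by Corollary \ref{c4}, $A\subset B$; hence the intersection is the singleton $\{A\}$, compactness forces $\mathrm{diam}(\mathcal{N}_k)\to 0$, and therefore for each $\eps>0$ some $k$ gives $d_H(f^{nm_k}(A),A)<\eps$ for \emph{all} $n$ — both containments at once, which is exactly $A\in UR(2^f)$. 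Your easy direction $UR(2^f)\subseteq AP(2^f)$ is fine; to complete the hard direction you would need something playing the role of Lemma \ref{p42}, which your sketch does not contain.
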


\begin{lem}\label{p42} Under the assumptions of Theorem \ref{th4}, an infinite minimal set of $2^f$ is not totally minimal.
\end{lem}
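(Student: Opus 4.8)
The plan is to argue by contradiction. I would assume that $M$ is an infinite minimal set of $2^f$ which is \emph{also} totally minimal, i.e. minimal for $2^{f^n}=(2^f)^n$ for \emph{every} $n\in\mathbb{N}$, and then derive that $M$ must collapse to a single point, contradicting its infiniteness. The mechanism I would exploit is the tension between the two hypotheses: pointwise periodicity pins each individual point of a set $B\in M$ down as a fixed point of a suitable power of $f$, while total minimality forces the $2^{f^n}$-orbit of $B$ to sweep out all of $M$.

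Concretely, I would fix $B\in M$ and an arbitrary point $b\in B$. Since $f$ is pointwise periodic there is $n\in\mathbb{N}$ with $f^n(b)=b$, and hence $b=f^{nk}(b)\in f^{nk}(B)$ for every $k\geq 0$. By total minimality, $M$ is minimal for $2^{f^n}$, so the forward orbit $\{(2^{f^n})^k(B)\}_{k\geq 0}=\{f^{nk}(B)\}_{k\geq 0}$ is dense in $(M,d_H)$ (in a minimal system the forward-orbit closure of any point is closed, invariant and nonempty, hence the whole space). The first real step is then to promote this to the claim that $b$ lies in \emph{every} element of $M$: given $B'\in M$, I would pick $k_j$ with $d_H(f^{nk_j}(B),B')\to 0$, and since $b\in f^{nk_j}(B)$ we have $d(b,B')\leq \sup_{x\in f^{nk_j}(B)}d(x,B')\leq d_H(f^{nk_j}(B),B')\to 0$, so $d(b,B')=0$ and therefore $b\in B'$ as $B'$ is closed. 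As $b\in B$ and $B'\in M$ were arbitrary, this gives $B\subseteq B'$ for all $B,B'\in M$; running the argument over both slots, any $B_1,B_2\in M$ satisfy $B_1\subseteq B_2\subseteq B_1$, so all elements of $M$ coincide and $M$ is a singleton — the desired contradiction.

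The delicate point, which I would flag as the crux rather than a genuine obstacle, is precisely the matching of the two hypotheses: the power $n$ that immobilizes $b$ must be one of the powers for which minimality (and hence forward-orbit density) is available, and total minimality supplies exactly this for every $n$. It is instructive to note \emph{why} merely minimal will not do: for an adding machine the orbit $\{f^{nk}(B)\}_k$ stays trapped in a proper clopen piece of $M$, so the density step fails and one cannot conclude $b\in B'$ for all $B'$ — consistent with the fact that an adding machine is minimal but not totally minimal. Everything else is the routine Hausdorff-metric estimate above, and in particular this route needs neither Proposition \ref{p41} nor Corollary \ref{c4}; alternatively one could phrase the same conclusion through the common union set $C=\cup_{n}f^n(B)$ and show $C\subseteq\cap_{B'\in M}B'$, but the direct argument is cleaner.
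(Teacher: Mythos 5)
Your proof is correct, and it takes a genuinely different and more elementary route than the paper's. The paper argues at the level of the ambient space $X$: it first invokes Corollary \ref{c4} (hence Proposition \ref{p41}) to see that $Y=\cup_{n\in\mathbb{N}}f^n(A)$ is closed and strongly $f$-invariant, then applies Baire's theorem to the open set $Y\setminus A$ to extract a nonempty open $O\subset Fix(f^N)\setminus A$ in $Y$; since $Y\setminus O$ is closed, strongly $f^N$-invariant and contains $A$, the set $\omega_{2^{f^N}}(A)$ misses $O$, while some iterate $f^n(A)$ meets $O$ because $O\subset Y$ --- so minimality already fails for the specific power $N$ produced by Baire. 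You instead assume total minimality and collapse $M$ directly in the hyperspace: the period $n$ of a single point $b\in B$ pins $b$ inside every $f^{nk}(B)$, and density of the $2^{f^n}$-forward orbit of $B$ in the minimal system $(M,2^{f^n})$ then forces $b$ into every member of $M$, whence all members of $M$ coincide. Your route needs neither Proposition \ref{p41}, nor Corollary \ref{c4}, nor Baire category, and it proves the marginally stronger statement that every totally minimal set of $2^f$ is a singleton fixed by $2^f$; what the paper's argument produces in exchange is an explicit witness $N$ together with an open set avoided by the $f^N$-orbit of $A$, but nothing later in the paper uses more than the bare statement of the lemma, so your proof is a valid drop-in replacement. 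Your closing remark about adding machines correctly identifies why plain minimality would not suffice: the $2^{f^n}$-orbit there stays in a proper clopen piece, so the density step is exactly where total minimality is consumed.
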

\begin{proof}
Suppose $M$ is an infinite minimal set of $2^f$ and let $A\in M$. By Corollary \ref{c4}, $Y=\cup_{n\in\mathbb{N}}f^n(A)$ is a closed strongly $f$-invariant subset of $X$. Set $g=f_{|Y}$. As $M$ is infinite, $A\subsetneq Y$ so by Baire's Theorem, there exist $N\in\mathbb{N}$ and a non-empty open subset $O$ of $Y$ such that $O\subset Fix(g^n)\setminus A$. Observe that $Y\setminus O$ is a closed strongly $f^N$-invariant subset of $Y$ and $f^{kN}(A)\cap O=\emptyset$ for every $k\in\mathbb{N}$. Thus for any $B\in\omega_{2^{f^N}}(A)$, $B\in 2^{Y\setminus O}$, in particular $B\cap O=\emptyset$. However, there certainly exists an $n\in\mathbb{N}$ so that $f^n(A)\cap O\neq\emptyset$. Hence $f^n(A)\notin \omega_{2^{f^N}}(A)$.
\end{proof}

\begin{proof} [Proof of Theorem \ref{th4}] Let $A\in AP(2^f)\setminus P(2^f)$ then the closure of $O_{2^f}(A)$ in $2^X$ is an infinite minimal set for $2^f$, lets denote this set by $\mathcal{N}_0$. By Lemma \ref{p42}, $\mathcal{N}_0$ is not totally minimal for $2^f$ thus there is $n_1\in\mathbb{N} $  such that $M_1:=\overline{O_{2^{f^{n_1}}}(A)}$ is a proper subset of $\mathcal{N}_0$ and  so is $\mathcal{N}_1:=\overline{O_{2^{f^{2n_1}}}(A)}$. It follows that
$$\mathcal{N}_0=\mathcal{N}_1\cup\dots\cup f^{2n_1-1}(\mathcal{N}_1).$$

Note also that $\mathcal{N}_1$ is an infinite minimal set for $2^{f^{2n_1}}$. Again by Lemma \ref{p42}, $\mathcal{N}_1$ is not totally minimal for $2^{f^{2n_1}}$ and so there is $n_2\in\mathbb{N}$ such that $M_2:=\overline{O_{2^{f^{2n_1n_2}}}(A)}$ is a proper subset of $\mathcal{N}_1$ and so is $\mathcal{N}_2:=\overline{O_{2^{f^{2n_13n_2}}}(A)}$. Hence
$$\mathcal{N}_1=\mathcal{N}_2\cup f^{2n_1}(\mathcal{N}_2) \dots\cup f^{2n_1(3n_2-1)}(\mathcal{N}_2).$$

And so on we proceed by induction and we get for each $k\in\mathbb{N}$ a minimal set $\mathcal{N}_k:=\overline{O_{2^{f^{m_k}}}(A)}$ for $2^{f^{m_k}}$ such that  $\mathcal{N}_k$ is proper subset of $\mathcal{N}_{k-1}$ and $$m_k=2n_1\times\dots\times (k+1)n_k.$$  Consequently, $(\mathcal{N}_k)_k$ is a nested sequence of compact subsets in the hyperspace $2^X$, thus $\mathcal{N}:=\cap_{k\in\mathbb{N}}\mathcal{N}_k$ is a non-empty compact subset of $2^X$.

We claim that $\mathcal{N}$ is a single point.  Indeed, let $B\in \mathcal{N}$ and let $x\in B$. Suppose that $N\in\mathbb{N}$ is the period of $x$. Thus  $N$ divides $m_N$. Consider $g=f^{m_N}$ then $x\in Fix(g)$ and $A,B\in \mathcal{N}_N$.  Fix $\eps<0$. Recall that $A\in AP(g)$ then there exists $n_0\in\mathbb{N}$ such that for any $n\in\mathbb{N}$, $d_H(g^{n+i}(A),A)<\eps$ for some $i\in\{0,\dots,n_0-1\}$. As $y\in Fix(g)$, there exists $\eta\in (0,\eps]$ such that $g^j(B(x,\eta))\subset B(x,\eps)$ for any $j=0,\dots,i_0$. By Proposition \ref{p41}, $\cup_{n\in\mathbb{N}}g^n(A)$ is a closed strongly $g$-invariant subset of $X$, hence it contains $B$. It turns out that for some $n\in\mathbb{N}$ and some $y\in A$, $g^n(y)\in B(x,\eta)$ which implies that $g^{n+j}(y)\in B(x,\eps)$ for any $j=0,\dots,i_0$. However there is $i\in\{0,\dots,n_0-1\}$ such that $d_H(A,g^{n+i}(A))<\eps$. Thus $d(x,A)<2\eps$ and by letting $\eps\to 0$, we get $x\in A$. In result $B\subset A$.

By Corollary \ref{c4}, $$\cup_{n\in\mathbb{N}}f^n(A)=\cup_{n\in\mathbb{N}}f^n(B).$$
Therefore, if we consider $B$ instead of $A$ then we get by a similar arguments $A\subset B$. In result, $A=B$.

Consequently, $\mathcal{N}$ is a single point and so $\lim_{k\to +\infty}diam (\mathcal{N}_k)=0$. Thus for a given $\eps>0$, there is $k\in\mathbb{N}$ such that $diam(\mathcal{N}_k)<\eps$. Hence $d_H(f^{nm_k}(A),A)<\eps$ for all $n\in\mathbb{N}$.
\end{proof}

\begin{prop}\label{p44} Let $f:X\to X$ be a pointwise periodic homeomorphism of a compact metric space $X$. Then the property of being a closed regularly recurrent set of $2^f$ is stable under finite intersection.
\end{prop}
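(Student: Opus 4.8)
The plan is to reduce the statement to the case of two sets and then confront the single genuine difficulty, namely that the intersection map $(U,V)\mapsto U\cap V$ on $2^X\times 2^X$ is \emph{not} continuous for the Hausdorff metric. Since a finite intersection is built by intersecting one set at a time, an easy induction reduces everything to showing: if $A,B\in UR(2^f)$ and $D:=A\cap B\neq\emptyset$, then $D\in UR(2^f)$ (the partial intersections being automatically nonempty once the full intersection is). I will use the defining property of a regularly (uniformly) recurrent point of $2^f$: for every $\eps>0$ there is $N\in\mathbb{N}$ with $d_H(f^{kN}(A),A)<\eps$ for all $k\in\mathbb{N}$, and similarly for $B$. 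Because $f$ is a homeomorphism, $f^{kP}(D)=f^{kP}(A)\cap f^{kP}(B)$ for every $P$, so the task is to produce, for each prescribed $\eps>0$, a single return period $P$ with $d_H\big(f^{kP}(A)\cap f^{kP}(B),\,A\cap B\big)<\eps$ for all $k$.

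Next I would treat the two directions of the Hausdorff distance separately. For the \emph{upper} direction (every point of $f^{kP}(D)$ lies near $D$) I would establish a uniform upper semicontinuity statement by a routine compactness argument: for the given $\eps$ there is $\delta>0$ so that whenever $d_H(A',A)<\delta$ and $d_H(B',B)<\delta$, every point of $A'\cap B'$ lies within $\eps$ of $A\cap B$. Indeed, if this failed one could extract a convergent sequence $x_n\in A_n\cap B_n$ with $A_n\to A$, $B_n\to B$ and $d(x_n,A\cap B)\ge \eps$; its limit would lie in both $A$ and $B$, a contradiction. Choosing $P$ to be a common multiple of the recurrence periods $N_A,N_B$ attached to this $\delta$ forces $d_H(f^{kP}(A),A)<\delta$ and $d_H(f^{kP}(B),B)<\delta$ for all $k$, which controls this direction.

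The \emph{lower} direction (every point of $D$ lies near $f^{kP}(D)$) is where pointwise periodicity enters, and it is the only nonformal step, precisely because intersection is not lower semicontinuous, so closeness of the factors is not enough. Here I would exploit that $D$ is compact and every point of $D$ is periodic. Fix a finite $\eps$-net $y_1,\dots,y_r\in D$ of $D$ and let $m_i$ be the period of $y_i$. If $P$ is chosen also to be a multiple of $m_1,\dots,m_r$, then each $y_i$ is fixed by $f^{P}$, so $y_i=f^{-kP}(y_i)\in A\cap B$ yields $y_i\in f^{kP}(A)\cap f^{kP}(B)=f^{kP}(D)$ for every $k$. Hence every $y\in D$ is within $\eps$ of the net, and therefore within $\eps$ of $f^{kP}(D)$, uniformly in $k$.

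Combining the two, I would take $P$ to be a common multiple of $N_A$, $N_B$ and $m_1,\dots,m_r$; both estimates then hold simultaneously, giving $d_H(f^{kP}(D),D)<\eps$ for all $k$ and hence $D\in UR(2^f)$. The induction on the number of sets finishes the proof. The main obstacle, and the reason the naive factor-map argument fails, is exactly the absence of lower semicontinuity of intersection: for unions the map $(U,V)\mapsto U\cup V$ is Hausdorff-Lipschitz and the result would be immediate, whereas for intersections the periodic-net choice of $P$ is the device that circumvents the discontinuity.
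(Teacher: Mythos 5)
Your proof is correct and follows essentially the same route as the paper's: the upper bound via the compactness/contradiction argument showing that points of $B(A,\eta)\cap B(B,\eta)$ lie in $B(A\cap B,\eps)$, the lower bound via a finite $\eps$-net of periodic points in $A\cap B$ with a common period, and a common multiple of all the relevant periods to combine the two estimates. The only addition is that you make the induction from two sets to finitely many explicit, which the paper leaves implicit.
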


\begin{proof} Let $A,B\in RR(2^f)$ such that $A\cap B\neq \emptyset$. For a given $\epsilon>0$ let $\eta\in (0,\epsilon]$ be such that $B(A,\eta)\cap B(B,\eta)\subset B(A\cap B,\epsilon)$. Such a number $\eta$ exists since otherwise for any $k\in\mathbb{N}$ there is $x_k\in B(A,\frac{1}{k})\cap B(B,\frac{1}{k})$  lying outside the open subset $B(A\cap B,\epsilon)$ thus the limit of any convergent subsequence of $(x_k)_k$ is a point of $ A\cap B$ and also a point of $X\setminus B(A\cap B,\epsilon)$ which is a contradiction.  For this positive number $\eta$ there is $N\in\mathbb{N}$ such that for all $k\in\mathbb{N}$, $d_H(A,f^{kN}(A))<\eta$ and $d_H(B,f^{kN}(B))<\eta$. So for any $x\in A\cap B$ and for all $k\in\mathbb{N}$, $d(f^{kN}(x), A)<\eta$ and also $d(f^{kN}(x), B)<\eta$ hence $d(f^{kN}(x), A\cap B)<\epsilon$. Consequently $f^{kN}(A\cap B)\subset B(A\cap B,\epsilon)$ for all $k\in\mathbb{N}$.

Now let $x_1,\dots,x_s$ be finite set of points in $A\cap B$ such that $A\cap B\subset B(x_1,\epsilon)\cup\dots\cup B(x_s,\epsilon)$. Let $M\in\mathbb{N}$ be a common period of each $x_i$, i.e $f^M(x_i)=x_i$ for any $i$. Then $A\cap B\subset B(f^{kM}(A\cap B),\epsilon)$ for all $k\in\mathbb{N}$. In conclusion $d_H(A\cap B,f^{kMN}(A\cap B))<\epsilon$ for all $k\in\mathbb{N}$.
\end{proof}


\section{\bf On $\omega$-limit sets of $2^f$}

In this section we will show that any  $\omega$-limit set $\omega_{2^f}(A)$ of the induced map $2^f$ of a pointwise periodic homeomorphism $f:X\to X$ contains a unique minimal set. Furthermore, if this $\omega$-limit set is not minimal then it contains an adding machine minimal set. 

\begin{thm}\label{t51} Let $f:X\to X$ be a pointwise periodic homeomorphism of a compact metric space $X$ and let $A\in 2^X$. Then we have the following assertions:

\begin{itemize}
\item[(i)]  $\omega_{2^f}(A)$ contains a unique minimal set $M$.

\item[(ii)] $\omega_{2^f}(A)$ is finite if and only if $M$ is finite.

\item[(iii)] If moreover $\omega_{2^f}(A)$ is infinite then $M$ is an adding machine.
\end{itemize}
\end{thm}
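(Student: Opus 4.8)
The plan is to derive (ii) from (i) and (iii), and to reduce (i) and (iii) to the study of a single point $B$ of a minimal set that is proximal to $A$. Write $T=2^{f}$ and $\omega=\omega_{2^{f}}(A)$. The trivial half of (ii) is immediate: if $\omega$ is finite then its minimal subset is finite. For the content of (ii), note that an adding machine $\triangle_\alpha$ with every $j_i\ge 2$ is infinite, so once (iii) is proved, $\omega$ infinite forces the minimal set to be infinite, and contrapositively $M$ finite forces $\omega$ finite. Thus I would prove (iii) first, then (i), and read off (ii).

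For (iii), fix a minimal set $M\subset\omega$, which exists by Zorn's lemma, and pick $B\in M$; every point of $M$ is uniformly recurrent, hence lies in $AP(T)=UR(T)$ by Theorem \ref{th4}. If $\omega$ is infinite then so is $M$, hence $B\notin P(T)$, and I would run verbatim the nested construction from the proof of Theorem \ref{th4} with $B$ in the role of $A$: using Lemma \ref{p42} at each stage produces integers $m_1\mid m_2\mid\cdots\to\infty$ and nested sets $\mathcal{N}_k:=\overline{O_{2^{f^{m_k}}}(B)}$, each minimal for $2^{f^{m_k}}$, properly decreasing, with $\cap_k\mathcal{N}_k=\{B\}$. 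Inside the minimal system $M$ the distinct translates $\{T^{i}(\mathcal{N}_k)\}$ are pairwise disjoint and finite in number, hence form a clopen partition $\mathcal{P}_k$ of $M$ into $q_k$ atoms that $T$ permutes cyclically, with $q_{k-1}\mid q_k$ and $q_k\to\infty$; these partitions are nested and, by $\cap_k\mathcal{N}_k=\{B\}$ together with the same argument applied at an arbitrary point of $M$, they separate points. A minimal system carrying such a nested sequence of cyclically permuted, point-separating clopen partitions is conjugate to $\triangle_\alpha$ with $\alpha=(q_1,q_2/q_1,\dots)$ by the characterization of adding machines used in \cite{NN}; hence $M$ is an adding machine.

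For (i), the crux, I must show $\omega$ cannot contain two distinct minimal sets. The reduction is clean: recall from the preliminaries that each minimal $N\subset\omega$ carries a point $B_N$ with $(A,B_N)$ proximal; and if $M_1\ne M_2$ were minimal with points $B_1,B_2$ for which $(B_1,B_2)$ is proximal, then a subsequential limit of $f^{n_k}(B_1)=\lim f^{n_k}(B_2)$ along times with $d_H(f^{n_k}B_1,f^{n_k}B_2)\to 0$ would lie in $M_1\cap M_2=\emptyset$, a contradiction. So everything comes down to forcing $(B_1,B_2)$ proximal, and since proximality is not transitive this is precisely where pointwise periodicity must be used. My plan is to pass to the union $K:=\cup_{C\in\omega}C\subset X$, which is compact, strongly $f$-invariant and contained in $P(f)$, and to argue that, as the $\omega$-limit of the moving compactum $A$, it is internally chain transitive and so has the weak incompressibility property; Theorem \ref{t2} then splits $K$ into finitely many components cyclically permuted by $f$. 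Since the supports $\cup_{n}f^{n}(B_i)$ furnished by Proposition \ref{p41} and Corollary \ref{c4} are closed, strongly invariant unions of such components, I expect them to coincide, which puts a common point of $X$ into some $B_1\in M_1$ and $B_2\in M_2$; Proposition \ref{p44} then makes $B_1\cap B_2$ uniformly recurrent and feeds it back into the proximality machinery to collapse $M_1$ and $M_2$. The delicate point, which I expect to be the main obstacle, is to control where the orbit of such an intersection accumulates so that the collapse genuinely takes place inside $\omega$ rather than in a larger orbit closure.

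Finally (ii) follows at once: by (i) there is a unique minimal set $M\subset\omega$; if $\omega$ were infinite then (iii) would force $M$ to be an infinite adding machine, so $M$ finite gives $\omega$ finite, while the converse is trivial.
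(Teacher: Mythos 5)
There are two genuine gaps in your plan, and they sit at the two places where the paper actually has to work.

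First, your treatment of (ii) and (iii) is circular. To start the proof of (iii) you assert ``if $\omega_{2^f}(A)$ is infinite then so is $M$'' and use this to conclude $B\notin P(2^f)$ before launching the nested-partition construction; but that implication is precisely the nontrivial direction of (ii), which you propose to deduce \emph{from} (iii). The implication is false for general systems (an infinite $\omega$-limit set can perfectly well contain only a fixed point as its minimal set), so it genuinely needs the pointwise-periodic hypothesis. The paper supplies it via Proposition \ref{55}(ii): if $M$ is finite it is a periodic orbit of $2^f$, the point $B\in M$ proximal to $A$ is then a periodic point of $2^f$ whose proximal cell is the singleton $\{B\}$, hence $A=B$ and $\omega_{2^f}(A)=\omega_{2^f}(B)=M$ is finite. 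You never invoke Proposition \ref{55}(ii) anywhere, and without it neither (ii) nor the starting point of (iii) is available. (Once $M$ is known to be infinite, your nested clopen partitions are essentially a re-proof of the Block--Keesling characterization of adding machines, which the paper simply cites from \cite{blo2} after noting that all points of $M$ lie in $UR(2^f)$ by Theorem \ref{th4}; that part is sound but unnecessarily laborious.)

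Second, your proof of (i) is not a proof: you explicitly leave open ``the delicate point'' of where the orbit of the intersection accumulates, and the route you sketch (passing to $K=\cup_{C\in\omega_{2^f}(A)}C$, establishing weak incompressibility of $K$ in $X$, invoking Theorem \ref{t2}, and ``expecting'' the supports $\cup_n f^n(B_i)$ to coincide) is both unjustified at several steps and aimed at the wrong target. You try to force $(B_1,B_2)$ proximal directly; the paper never does this. Its argument is a sandwich: since $A$ is proximal to some $B\in M$ and some $B'\in M'$ and both lie in $UR(2^f)$ by Theorem \ref{th4}, Proposition \ref{55}(i) gives $A\subset B$ and $A\subset B'$, so $C:=B\cap B'$ is nonempty and lies in $UR(2^f)$ by Proposition \ref{p44}; then $f^n(A)\subset f^n(C)\subset f^n(B)$ forces $d_H(f^n(C),f^n(B))\le d_H(f^n(A),f^n(B))$, so $(C,B)$ is proximal, and Proposition \ref{55}(i) applied once more yields $B\subset C$, hence $B=C\subset B'$, and by symmetry $B=B'$, so $M=M'$. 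The observation you are missing is that $A\subset B\cap B'$ already hands you a common element of the two uniformly recurrent sets, making the whole detour through $K$ and Theorem \ref{t2} unnecessary.
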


\begin{prop}\label{55} Let $f:X\to X$ be a pointwise periodic homeomorphism of a compact metric space $X$. The the following are true:

\begin{itemize}
\item[(i)] Let $A\in 2^X$ and $B\in UR(2^f)$. If $A$ and $B$ are proximal under $2^f$ then $A\subset B$.

\item[(ii)] The proximal cell of any periodic point $B\in 2^X$ is reduced to a single point $\{B\}$.
\end{itemize}
\end{prop}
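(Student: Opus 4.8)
The plan is to prove both parts with one device: pass to a suitable power $f^p$ under which the relevant point becomes \emph{fixed}, and transport proximality from $2^f$ to $2^{f^p}$ using that $2^{f^\rho}=(2^f)^\rho$ is a homeomorphism of $2^X$. First I would record the reduction used throughout. If $(A,B)$ is proximal for $2^f$, pick $n_i\to\infty$ with $d_H(f^{n_i}(A),f^{n_i}(B))\to 0$; by compactness of $2^X$ pass to a subsequence with $f^{n_i}(A)\to C$, whence $f^{n_i}(B)\to C$ for the \emph{same} limit. For any fixed $p\ge 1$, refine once more so that $n_i\equiv\rho\ (\mathrm{mod}\ p)$ for a single residue $\rho$, and write $n_i=\rho+pm_i$. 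Applying the inverse homeomorphism $(2^{f^\rho})^{-1}$ gives $(2^{f^p})^{m_i}(A)\to f^{-\rho}(C)$ and $(2^{f^p})^{m_i}(B)\to f^{-\rho}(C)$; in particular $(A,B)$ is proximal for $T:=2^{f^p}$, with common limit $C_0:=f^{-\rho}(C)$.

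For part (i), fix $a\in A$ and take $p$ to be its $f$-period, so $f^p(a)=a$. Running the reduction with this $p$, the point $a$ is fixed content, $T^{m}(a)=a$ for all $m$, so $a\in T^{m_i}(A)$ together with $T^{m_i}(A)\to C_0$ forces $a\in C_0$. Next, $B\in UR(2^f)$ inherits uniform recurrence under powers, so $B\in UR(2^{f^p})=AP(2^{f^p})$ by Theorem \ref{th4} applied to the pointwise periodic homeomorphism $f^p$; hence $M_0:=\overline{O_{2^{f^p}}(B)}$ is minimal for $T$ and contains $C_0$ (since $T^{m_i}(B)\to C_0$). Minimality yields $k_j$ with $T^{k_j}(C_0)\to B$, and as $a\in C_0$ is $T$-fixed we get $a\in T^{k_j}(C_0)$ for every $j$, so $a\in B$. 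Since $a\in A$ was arbitrary, $A\subset B$.

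For part (ii), a periodic point $B$, say $f^N(B)=B$, is uniformly recurrent, so part (i) already gives $A\subset B$ for every $A$ in the proximal cell; it remains to prove $B\subset A$, which I would obtain by upgrading the mechanism of (i) to a \emph{density} argument inside $B$. To show $A$ meets every nonempty relatively open $W\subset B$, use that $f|_B$ is pointwise periodic with $B=\cup_k Fix(f^k|_B)$: by Baire's theorem there are $k_0$ and a nonempty relatively open $U\subset W$ on which $f^{k_0}=\mathrm{id}$. Set $p=\mathrm{lcm}(N,k_0)$, so $g:=f^p$ satisfies $g(B)=B$ and $g=\mathrm{id}$ on $U$. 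The reduction gives $l_i\to\infty$ with $g^{l_i}(A)\to B$; since $A\subset B$ and $g(B)=B$ these sets lie in $B$, so eventually $g^{l_i}(A)\cap U\neq\emptyset$, say $u_i=g^{l_i}(a_i)\in U$ with $a_i\in A$. Because $g$ is the identity on $U$, the point $u_i$ is $g$-fixed, hence $a_i=g^{-l_i}(u_i)=u_i\in U$, giving $a_i\in A\cap U$. Thus $A$ meets $W$; density and closedness give $B\subset A$, so the proximal cell of $B$ equals $\{B\}$.

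The crux in both parts is that the proximal times $n_i$ are \emph{unstructured}, while $B$ recurs only along arithmetic progressions and $f$ is typically \emph{not} equicontinuous, so one cannot naively ``undo'' $f^{n_i}$ to pull a limiting relation back to $A$. The device that defeats this is to move to the power $f^p$ for which the target point is fixed: transporting proximality to $2^{f^p}$ is legitimate precisely because $2^{f^\rho}$ is a homeomorphism, and once the relevant point is fixed the inverse $g^{-l_i}$ acts trivially on it, bypassing the lack of equicontinuity. The step I expect to be most delicate to state correctly is the \emph{simultaneous} matching in (ii) of the return power with the local period: this is exactly why $p$ is taken as a common multiple of $N$ and the Baire-supplied $k_0$, so that $g=f^p$ is literally the identity on $U$.
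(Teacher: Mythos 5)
Your argument is correct. For part (ii) it follows essentially the paper's route: both apply Baire's theorem to $B=\bigcup_k\bigl(B\cap Fix(f^k)\bigr)$ to extract a nonempty relatively open $U\subset B$ on which a suitable power $g=f^p$ is the identity, then use proximality to force $g^{l_i}(A)$ to meet $U$, and pull back through $g^{-l_i}$ using that points of $U$ are $g$-fixed; the paper phrases this as a contradiction with an open set inside $B\setminus A$ rather than as density of $A$ in $B$, but the mechanism is the same. For part (i) your organization is genuinely different. The paper argues quantitatively: it fixes $\eps>0$, uses regular recurrence of $B$ under $2^{f^N}$ along an arithmetic progression together with proximality along a block of $s$ consecutive multiples of $N$ (where $N$ is the period of $x$), and intersects the two to produce a single time $m$, a multiple of $sN$, at which $x=f^m(x)\in f^m(A)$ is $2\eps$-close to $B$. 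You instead extract a subsequential limit $C_0$ of the proximal times, show the $f^p$-fixed point $a$ lies in $C_0$, and return from $C_0$ to $B$ via minimality of $\overline{O_{2^{f^p}}(B)}$. A side benefit of your reduction is that it gives a self-contained proof that proximality for $2^f$ transfers to $2^{f^p}$ (via the homeomorphism $2^{f^{-\rho}}$), a standard fact the paper invokes without justification; the cost is that you lean on minimality of the orbit closure of an almost periodic point, which the paper's more elementary computation avoids. Both proofs are sound.
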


\begin{proof} (i) Let $x\in A$ and suppose that $N$ is the period of $x$. Let $\eps>0$. As $B\in UR(2^{f^N})$ then there is $s\in\mathbb{N}$ such that $d_H(B,f^{snN}(B))<\eps$ for all $n\in\mathbb{N}$. Note further that $A$ and $B$ are proximal under $2^{f^N}$. Thus there is $n\in\mathbb{N}$ such that $d_H(f^{(n+i)N}(A),f^{(n+i)N}(B))<\eps$ for all $i=0,\dots, s-1$. One of the integers $(n+i)N$ is a multiple of $sN$ and so $d(x,B)<2\eps$. Consequently, $x\in \overline{B}=B$.

(ii) Let $B\in P(2^f)$ with period $N$. Let $A\in 2^X$ such that $(A,B)$ is a proximal under $2^f$. As $(A,B)$ is also a proximal pair under $2^{f^N}$ we can assume without loss of generality that $N=1$. By (i), we have already $A\subset B$. Suppose that $B\setminus A$ is not empty then by Baire's Theorem, there is $x\in B$ and $\delta>0$ such that $(B(x,\delta)\cap B)\subset (B\setminus A)\cap Fix(f^l)$ for some $l\in\mathbb{N}$. Note that $(A,B)$ is proximal for $2^{f^l}$ thus $f^{kl}(A)\cap (B(x,\delta)\cap B)\neq\emptyset$ for some $k\in\mathbb{N}$ and so is $A\cap B(x,\delta)\cap B$, a contradiction. Therefore, $A=B$.
\end{proof}

\begin{cor}\label{5.4} Let $f:X\to X$ be a pointwise periodic homeomorphism of a compact metric space $X$. Then the following are true:

\begin{itemize}
\item[(i)] Any transitive subsystem of $(2^X,2^f)$ is without periodic points unless it has a finite phase space which is a periodic orbit. In particular, $(2^X,2^f)$ has no Devaney chaotic subsystems.
\item[(ii)] $(2^X,2^f)$ is topologically transitive if and only if $X$ is trivial (i.e., a single point ). In particular, $(2^X,2^f)$ is neither Auslander-Yorke chaotic nor Devaney chaotic.
\end{itemize}
\end{cor}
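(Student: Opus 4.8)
The plan is to reduce both parts to a single mechanism: Proposition~\ref{55}(ii), which says the proximal cell of a periodic point of $2^f$ is a singleton, combined with the standard fact (quoted in the introduction, see \cite{onli}) that a point is proximal to at least one point of every minimal set contained in its $\omega$-limit set.

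For part~(i), I would start from a transitive subsystem $(Y,2^f_{|Y})$ of $(2^X,2^f)$ and \emph{assume} it contains a periodic point $B$; the goal is to force $Y$ to be a single periodic orbit. Since $(Y,2^f_{|Y})$ is transitive, the set of transitive points is residual in $Y$, so I may fix a transitive point $A\in Y$, which satisfies $\omega_{2^f}(A)=Y$. The orbit $O_{2^f}(B)$ is finite, hence a minimal subset of $Y=\omega_{2^f}(A)$, so there is $C\in O_{2^f}(B)$ with $(A,C)$ proximal under $2^f$. As $C$ is periodic, Proposition~\ref{55}(ii) gives that its proximal cell is $\{C\}$, whence $A=C$. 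Thus the transitive point $A$ is itself periodic, and therefore $Y=\overline{O_{2^f}(A)}=O_{2^f}(A)$ is a finite periodic orbit. This is exactly the stated dichotomy. The Devaney consequence then follows: a Devaney chaotic subsystem is transitive with a dense (in particular nonempty) set of periodic points, so by the dichotomy it would be a finite periodic orbit, which is not sensitive and hence not chaotic.

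For part~(ii), the crucial observation is that the whole space, regarded as the element $X\in 2^X$, is a fixed point of $2^f$, because $f$ is a surjective homeomorphism and so $2^f(X)=f(X)=X$. Suppose $(2^X,2^f)$ is topologically transitive. Applying the argument of part~(i) to the transitive subsystem $Y=2^X$, which contains the fixed point $X$, the transitive point $A$ is proximal to $X$ and hence, by Proposition~\ref{55}(ii), equals $X$. Therefore $X$ is a transitive point and $2^X=\omega_{2^f}(X)=\{X\}$, the last equality because $X$ is fixed. But $2^X$ reduces to the single element $X$ precisely when $X$ admits no proper nonempty closed subset, i.e. when $X$ is a single point; the converse is trivial. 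The final assertions are then definitional: Auslander--Yorke and Devaney chaos both require transitivity, which by the above holds only for the one-point space, and a one-point system is not sensitive and hence chaotic in neither sense.

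I do not expect a deep obstacle here; the work is almost entirely bookkeeping. The two points that require care are the correct treatment of the degenerate finite periodic orbit case in the Devaney consequence (recognizing that such an orbit, although transitive with dense periodic points, fails to be sensitive and is therefore not counted as chaotic), and the identification of $X$ itself as a fixed point of $2^f$ together with the elementary fact that $2^X=\{X\}$ forces $X$ to be a single point. Once these observations are in place, both parts collapse to one application of Proposition~\ref{55}(ii).
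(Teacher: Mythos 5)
Your proof is correct and follows precisely the route the paper evidently intends (the corollary is stated there without proof, immediately after Proposition~\ref{55}): combine the standard fact that a point is proximal to some point of every minimal set inside its $\omega$-limit set with Proposition~\ref{55}(ii) to identify the transitive point with the periodic one, and for part (ii) observe that $X$ itself is a fixed point of $2^f$. The only caveat is cosmetic: the paper's stated definition of Devaney chaos omits sensitivity, under which a finite periodic orbit would technically qualify, so excluding it requires, as you do, reading sensitivity (or nontriviality of the phase space) into the definition.
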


\begin{proof} [Proof of Theorem \ref{t51}]  (i) It is well known that for any minimal subset $M$ of $\overline{O_{2^f}(A)}$, $A$ is proximal to a point $B\in M$. Suppose $M$ and $M'$ are two minimal subsets of  $\overline{O_{2^f}(A)}$. Take $(B,B')\in M\times M'$. By Theorem \ref{th4}, $B,B'\in UR(2^f)$ and $A\subset B\cap B'$ by Proposition\ref{55} (i).  Denote by $C=B\cap B'$. By Propsition \ref{p44}, $C\in UR(2^f)$. Observe that $A\subset C\subset B$. Hence if $(n_i)_i$ is an infinite sequence of positve integers such that $\lim_{i\to +\infty}d_H(f^{n_i}(A),f^{n_i}(B))=0$ then so is the limit $\lim_{i\to +\infty}d_H(f^{n_i}(C),f^{n_i}(B))$. It follows that $C$ and $B$ are proximal under $2^f$. According to Proposition \ref{55} (i), $C=B$. This implies that $B\subset B'$. Similarly if we consider $B'$ instead of $B$ we get the other inclusion $B'\subset B$. Finally $B=B'$ and so $M=M'$. .

(ii) Let $B\in M$ such that $(A,B)$ is proximal for $2^f$. If $M$ is finite then it is a periodic orbit and so $B\in P(2^f)$. By Proposition \ref{55} (ii), $A=B$ which implies that $\omega_{2^f}(A)=\omega_{2^f}(B)$ is finite.

(iii) Suppose that $\omega_{2^f}(A)$ is infinite. Then by Proposition \ref{55} and Theorem \ref{th4},  $M$ is an infinite minimal set all of its points are uniformly recurrent. It turns out that  $M$ is an adding machine by \cite{blo2}.
\end{proof}


\section{\bf Chaos for $2^f$}
It is well known that there is no universally accepted definition of chaos. Consequently, various alternative definitions of chaos have been proposed in the literature (see, for example, \cite{Li} for further discussion). This naturally leads us to the following question:

\begin{Que}
What types of chaos can occur in the corresponding hyperspace dynamical system when all points in the base space are periodic? \end{Que}
As shown above, neither positive entropy, Devaney chaos nor Aulander-Yorke chaos can occur in this case. However, as we will see in Theorem \ref{chaos},  other well-known types of chaos do arise.

\begin{thm}\label{chaos} Let $f:X\to X$ be a pointwise periodic homeomorphism of a compact metric space $X$. The following are equivalent:
\begin{itemize}
\item[(1)] $(X,f)$ is not equicontinuous;

\item[(2)] $(2^X,2^f)$ is not equicontinuous;

\item[(3)] There exists an uncountable set $S\subset R(2^f)$ which is both a scrambled and an $\omega$-scrambled set for $2^f$. In particular,  $(2^X,2^f)$ is both  Li-Yorke and $\omega$-chaotic;

\item[(4)] $(2^X,2^f)$ has a Li-Yorke pair;

\item[(5)] $(2^X,2^f)$ has an $\omega$-scrambled set with cardinal at least two;

\item[(6)] $R(2^f)\setminus AP(2^f)\neq\emptyset$.
\end{itemize}
\end{thm}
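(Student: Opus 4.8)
The plan is to prove the six conditions equivalent by establishing the cycle $(1)\Rightarrow(6)\Rightarrow(3)\Rightarrow(4)\Rightarrow(2)\Rightarrow(1)$ together with the side loop $(3)\Rightarrow(5)\Rightarrow(2)$; since every condition then both implies and is implied by $(6)$, all are equivalent. Several links are immediate. The implications $(3)\Rightarrow(4)$ and $(3)\Rightarrow(5)$ are trivial, because an uncountable set that is simultaneously scrambled and $\omega$-scrambled contains both a Li-Yorke pair and a two-point $\omega$-scrambled set. For $(2)\Rightarrow(1)$ I argue by contraposition: the map $x\mapsto\{x\}$ embeds $(X,f)$ isometrically into $(2^X,2^f)$ as the invariant set of singletons, so equicontinuity of $2^f$ restricts to equicontinuity of $f$, while conversely a direct Hausdorff-metric estimate shows that equicontinuity of $f$ lifts to $2^f$; thus in fact $(1)\Leftrightarrow(2)$.

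Next come the two ``chaos forces non-equicontinuity'' links $(4)\Rightarrow(2)$ and $(5)\Rightarrow(2)$. Both rest on the standard fact that in a compact equicontinuous system the proximal relation is trivial and every point is uniformly recurrent with minimal orbit closure. Hence a Li-Yorke pair, being proximal but not asymptotic (and therefore a genuine pair of distinct points), cannot exist in an equicontinuous system, which gives $(4)\Rightarrow(2)$; and a two-point $\omega$-scrambled set $\{A,B\}$ would force $\omega_{2^f}(A)$ and $\omega_{2^f}(B)$ to be distinct minimal sets with nonempty intersection, which is absurd, giving $(5)\Rightarrow(2)$.

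The first substantial step is $(1)\Rightarrow(6)$. I first note that, $f$ being pointwise periodic, a globally bounded period function would give $f^{N}=\mathrm{id}$ for some $N$ and hence equicontinuity of $f$; thus non-equicontinuity forces the periods to be unbounded, and by compactness there are a point $x$ and periodic points $x_k\to x$ whose periods $p_k$ tend to infinity. From this local data I build a closed set $C$ meeting infinitely many of the finite orbits $O_f(x_k)$, and show $C$ is recurrent for $2^f$ by synchronizing the returns on the finitely many orbits that are resolved at a given scale $\eps$, the remaining points clustering near $x$ and contributing only arbitrarily small fluctuations; since the synchronizing periods grow as $\eps\to 0$, the return times of $C$ are not syndetic. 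Hence $C\in R(2^f)\setminus UR(2^f)$, and by Theorem \ref{th4} we obtain $C\in R(2^f)\setminus AP(2^f)$, which is $(6)$. The delicate point here is guaranteeing honest recurrence of $C$ while keeping the gaps between return times unbounded.

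The main obstacle is $(6)\Rightarrow(3)$, where the full chaotic set is manufactured. Given $C\in R(2^f)\setminus AP(2^f)$, Theorem \ref{th4} yields $C\notin UR(2^f)$, and Theorem \ref{t51} supplies the unique minimal set $M\subset\omega_{2^f}(C)$; since $C$ is recurrent but not uniformly recurrent it cannot lie in $M$, so $\omega_{2^f}(C)$ is infinite and $M$ is an infinite adding machine conjugate to $(\triangle_\alpha,f_\alpha)$. As recalled in Section 1 (see \cite{onli}), $C$ is proximal to some $B_0\in M$, and this pair is proximal but not asymptotic (asymptoticity would force $C$ into $AP(2^f)$), i.e. a proper Li-Yorke pair lying over the Cantor set $M$. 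The plan is then to spread this single proximal--non-asymptotic relation over all of $M$ using minimality, and to run a Cantor (Mycielski-type) scheme indexed by $\triangle_\alpha\cong\prod_i\{0,\dots,j_i-1\}$ to extract an uncountable family $\{C_\sigma\}\subset R(2^f)$ that is pairwise proximal (via the odometer return times $j_1\cdots j_k$) but pairwise non-asymptotic (via the persistent discrepancy inherited from the pair $(C,B_0)$). The same uncountability of $M$ is used to verify the $\omega$-scrambled conditions directly, so that $S=\{C_\sigma\}$ is at once scrambled and $\omega$-scrambled. The hard part is the simultaneous bookkeeping: arranging all $C_\sigma$ to be recurrent, pairwise Li-Yorke, and $\omega$-scrambled together, and in particular checking condition (iii), since the phase space $2^X$ need not be of the special type for which the reduced criterion of \cite{NN} applies.
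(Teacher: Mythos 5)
Your skeleton of implications is fine and several links ((3)$\Rightarrow$(4),(5), the equivalence (1)$\Leftrightarrow$(2), and (4),(5)$\Rightarrow$(2) via triviality of proximality in equicontinuous systems) are correct and essentially the same easy reductions the paper makes. But the two substantial steps both have genuine gaps. In (1)$\Rightarrow$(6), your argument that the constructed set is not uniformly recurrent --- ``since the synchronizing periods grow as $\eps\to 0$, the return times of $C$ are not syndetic'' --- does not work: unbounded gaps in the particular sequence of return times you construct do not preclude the existence of some \emph{other} syndetic set of return times, which is what non-uniform recurrence requires. The paper's mechanism is entirely different and is the missing idea: by Corollary \ref{crucial lemma} the periodic orbits $O_f(x_k)$ converge in $2^X$ to a \emph{nondegenerate} invariant continuum $C$, the recurrent set $A=\{y_s\}\cup\{x\}$ is countable so $\cup_n f^n(A)$ is countable, yet $\overline{\cup_n f^n(A)}\supset C$ is uncountable; hence $\cup_n f^n(A)$ is not closed and Proposition \ref{p41} together with Theorem \ref{th4} excludes $A$ from $UR(2^f)=AP(2^f)$. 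Without some replacement for this countability/closedness obstruction, your step (1)$\Rightarrow$(6) fails.

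The second gap is (6)$\Rightarrow$(3), which as written is a plan rather than a proof: the ``Cantor (Mycielski-type) scheme'' over the adding machine is exactly the part you defer (``the hard part is the simultaneous bookkeeping''), and it is unclear how that abstract scheme would deliver condition (i) of $\omega$-scrambling, i.e.\ uncountability of $\omega_{2^f}(A)\setminus\omega_{2^f}(A')$, starting from an arbitrary $C\in R(2^f)\setminus AP(2^f)$. The paper avoids this route altogether by proving (1)$\Rightarrow$(3) directly and concretely: it extracts from the nondegenerate continuum $C$ an uncountable set of $f$-fixed points $z\in C\cap B(y,\delta)$, forms $A_z=A\cup\{z\}$ with $A$ a single countable recurrent set accumulating on $C$, funnels all proximality through one uniformly recurrent $B\supset C\cap B(y,\delta)$ (so $A\subset A_z\subset B$ gives pairwise proximality), gets non-asymptoticity from Lemma \ref{l1}, and checks $\omega$-scrambling from the observation that $z\in T$ for every $T\in\omega_{2^f}(A_z)$, so that $\omega_{2^f}(A_{z'})\setminus\omega_{2^f}(A_z)$ is a residual subset of an uncountable perfect set. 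If you want to salvage your architecture, you would need to either carry out the Mycielski construction in full (including (i) of $\omega$-scrambling) or, more in line with the paper, route everything through the geometric limit continuum supplied by Corollary \ref{crucial lemma}.
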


\begin{proof} The equivalence between (1) and (2) is already known (see Proposition 7, \cite{baur}). Trivially any conditions from (3) to (6) implies (1) and so (2). Further, it is clear that (3) implies both (4) and (5) and (3) implies (6). It remains to prove that (1) implies (3) and (6).  We will begin by proving that $(1)$ implies $(6)$. Assume that $(X,f)$ is not equicontinuous then there is a sequence $(x_k)_k$ of points in $X$ that converges to $x$ and a constant $\delta>0$ such that for any $k\in\mathbb{N}$, there is $n_k\in\mathbb{N}$ for which $d(f^{n_k}(x),f^{n_k}(x_k))>\delta$. Without loss of generality, one can assume that $x\in Fix(f)$ since $x$ is also a non-equicontinuous point of $(X,f^m)$ where $m$ is the period of $x$. By applying Corollary \ref{crucial lemma}, the sequence $(O_f(x_k))_k$ of periodic orbits converges in the hyperspace $2^X$ to a periodic subcontinuum $C$  with period $1$. Thus $C$ is nondegenerate. Indeed, otherwise the space $Z=(\cup_{k\in\mathbb{N}}O_f(x_k))\cup C$ is a compact countable $f$-invariant subset of $X$ which implies that all point (in particular $x$) in $Z$ are equicontinuous with respect to $f_{|Z}$, a contradiction. We will built a closed set $A$ of $X$ such that $A\in R(2^f)\setminus UR(2^f)$.

Denote for each $k\in\mathbb{N}$ by $l_k$ the period of $x_k$. We will construct a subsequence $(y_s)_s$ of $(x_k)_k$ as follow: First let $y_1=x_1$ and let $N_1$ be its period. Observe that $\cap_{j=0}^{N_1}f^{-j}(B(x,1/2))$ is an open neighborhood of $x$ in $X$. There exists $n_2\in\mathbb{N}$ such that $x_{n_2}\in \cap_{j=0}^{N_1}f^{-j}(B(x,1/2))$ so let $y_2=x_{n_2}$ and $N_2$ be its period. Now suppose that we have already defined $y_1,\dots,y_s$ where $N_i$ is the period of each $y_i$. Thus there is $x_{s+1}\in \cap_{j=0}^{N_1\times\dots\times N_s}f^{-j}(B(x,1/(s+1)))$ so let $y_{s+1}=x_{n_{s+1}}$ and let  $N_{s+1}$ be its period. In this way, $\lim_{s\to +\infty}y_s=x$. Let $A=\{y_s, \ s\in\mathbb{N}\}\cup\{x\}$ then clearly $A\in 2^X$. For any $s\in\mathbb{N}$, $$d_H(A, f^{N_1\times\dots\times N_s}(A))<1/s.$$ Therefore $A\in R(2^f)$. Recall that $A$ is countable then so is $\cup_{n\in\mathbb{N}}f^n(A)$. However the closure of $\cup_{n\in\mathbb{N}}f^n(A)$ contains the nondegenerate continuum $C$ hence it is uncountable. Then by Proposition 4.1, $A\notin UR(2^f)$. This completes the proof of (1) implies (6).

Now we prove that (1) implies (3): We will consider the same $C$ as above. Notice that for any $n\in\mathbb{N}$, a scrambled set for $(2^f)^n$ is also a scrambled for $2^f$ thus we may assume again that $f(C)=C$.  By applying Baire's Theorem to $C=\cup_{n\in\mathbb{N}}C\cap Fix(f^n)$, there is $y\in C$ and $\delta>0$ such that $B(y,\delta)\cap C\subset Fix(f^L)$ for some $L\in\mathbb{N}$. Observe that $y$ is the limit of a sequence of points $(u_k)_k$ such that each $u_k$ is a point from the orbit of $x_k$. By applying Corollary \ref{crucial lemma}, the sequence $(O_{(2^f)^L}(u_k))_k$ has at least a convergent subsequence that converges to a nondegenerate subcontinuum $C'$ of $C$ that contains $y$. For this reason and for simplicity, we may again assume that $L=1$ and $C'=C$. In a similar way as above, there is  a subsequence $(y_k)_k$ of $(u_k)_k$ such that the closed subset  $A=\{y_k: \ k\in\mathbb{N}\}\cup\{y\}$ of $X$ is recurrent under the map $2^f$. Recall that $C\subset \overline{\cup_{n\in\mathbb{N}}f^n(A)}$. By Theorem \ref{th4}, there is $B\in UR(2^f)$ such that $(A,B)$ is a proximal pair under $2^f$. As $C$ is a nondegenerate continuum then the open neighborhood $B(y,\delta)\cap C$ of $y$ in $C$ is uncountable and so does $(B(y,\delta)\cap C)\setminus A$ (since $A$ is countable). We associate to each $z\in (B(y,\delta)\cap C)$, the set $A_z=A\cup\{z\}$. Clearly  $S=\{A_{z}: \ z\in (B(y,\delta)\cap C)\setminus A\}$ is a collection of pairwise distinct closed subsets of $X$. As $A\in R(f)$ then $\lim_{i\to +\infty}d_H(f^{n_i}(A),A)=0$ for some infinite sequence $(n_i)_i$ of positive integers. However as $(B(y,\delta)\cap C)\subset Fix(f)$,  for any $i$

$$d_H(f^{n_i}(A_z),A_z)=d_H(f^{n_i}(A),A). \ (*)$$

This implies that $A_z\in R(2^f)$ for any $z\in (B(y,\delta)\cap C)$.

\textit{Claim 1.} $S$ is a scrambled set for $2^f$: Relation (*) above gives that $(A_z,A_t)$ is not asymptotic under $2^f$ for any $z,t\in(B(y,\delta)\cap C)\setminus A$ such that $z\neq t$ (see also  Proposition 2.1 sonce all $A_z$ are recurrent for $2^f$). We shall prove now the proximal relation in $S$. Observe that $$C\cap B(y,\delta)\subset \overline{\cup_{n\in\mathbb{N}}f^n(A)}.$$ Further according to Propositions \ref{55} and \ref{p41},
$$\overline{\cup_{n\in\mathbb{N}}f^n(A)}\subset \cup_{n\in\mathbb{N}}f^n(B).$$
Hence, we get $$C\cap B(y,\delta)\subset B.$$ It follows that for each $z$, $A\subset A_z\subset B$. Recall that $(A,B)$ is a proximal pair under $2^f$ so there is an infinite sequence $(m_i)_i$ of positive integers such that $$\lim_{i\to +\infty} d_H(f^{m_i}(A),f^{m_i}(B))=0.$$ So for any $i$, $$d_H(f^{m_i}(A_z),f^{m_i}(B))\leq d_H(f^{m_i}(A),f^{m_i}(B)).$$ It follows that $\lim_{i\to +\infty} d_H(f^{m_i}(A_z),f^{m_i}(B))=0$. From the triangular inequality, $\lim_{i\to +\infty }d_H(f^{m_i}(A_z),f^{m_i}(A_{z'}))=0$ for any $z,z'\in (B(y,\delta)\cap C)$. This completes the proof of the claim 1.

\textit{Claim 2.} $S$ is an $\omega$-scrambled set for $2^f$: Notice that $B$ lies inside each $\omega_{2^f}(A_z)$ so $$\omega_{2^f}(A_z)\cap \omega_{2^f}(A_{z'})\neq\emptyset, \ \forall z,z'\in (B(y,\delta)\cap C)\setminus A.$$
 Furthermore, take $z,z'\in  (B(y,\delta)\cap C)\setminus A$ such that $z \neq z'$. Obviously,  $z\in T$ for any $T\in \omega_{2^f}(A_z)$. Immediatly, $A_{z'}\notin \omega_{2^f}(A_z)$. Hence, $\omega_{2^f}(A_{z'})\setminus \omega_{2^f}(A_{z})$ is a non-empty open set of $\omega_{2^f}(A_{z'})$ containing all transitive point of the system $(\omega_{2^f}(A_{z'}),2^f_{\omega_{2^f}(A_{z'})})$. So $\omega_{2^f}(A_{z'})\setminus \omega_{2^f}(A_{z})$ being a residual subset of the uncountable compact  perfect set $\omega_{2^f}(A_{z'}) $, is itself uncountable. This completes the proof of Claim 2.

\end{proof}

In the following, we will give a simple pointwise periodic homeomorphism $F$ for which we will construct a recurrent but not uniforly recurrent point under the induced map.

\begin{exe}  \rm{We will define $Y$ as a subset of the space $\mathbb{C}\times\mathbb{R}$ as follow: First lets denote for each $n\in\mathbb{N}$ and $k\in\{0,\dots,n-1\}$, $u_n^k=(exp(2ik\pi/n),1/n)$ and let $U_n=\{u_n^k: \ k=0,\dots, n-1\}$. Denote by $S^1$ the unit circle in the complex plane $\mathbb{C}$. So let
$$X=(\cup_{n\in\mathbb{N}}U_n)\cup( S^1\times \{0\}).$$
Now we define $F$ as follow: For each $n\in\mathbb{N}$ and $k\in\{0,\dots,n-1\}$, let $F(u_n^k)=u_n^{k+1(n)}$ and $F(z,0)=(z,0)$ for any $z\in S^1$. Clearly $Y$ is a compact metric space and $F:Y\to Y$ is a non-equicontinuous pointwise periodic homeomorphism. Thus $(2^Y,2^F)$ is Li-Yorke chaotic.

An example of recurrent but not almost periodic point for $2^F$: We define the set $$C=\{u_{\frac{1}{2^{2^n}}}^1: \ n\in\mathbb{N}\}\cup \{(1,0)\}.$$

Clearly, $C$ is a closed subset of $Y$. Observe that  $\cup_{n\in\mathbb{N}}f^n(C)$ is countable since $C$ is countable. By Proposition 4.1, $C$ is not almost periodic point for $2^f$. We are going now to show that $C\in R(2^f)$: For every $n\in\mathbb{N}$, we have
$$F^{2^{2^n}}(C)=\{(1,\frac{1}{2^{2}}),\dots,(1,\frac{1}{2^{2^n}})\}\cup\{u_{2^{2^k}}^{2^{2^n}}: \ k\geq n+1\}\cup \{(1,0)\}.$$

It follows that $d_H(F^{2^{2^n}}(C),C)=d(u_{2^{2^{n+1}}}^{2^{2^n}}, C)$ which goes to zero as $n\to +\infty$.

An example of a closed set $D$ not recurrent under $2^F$: We define the sets $$D=\{(1,\frac{1}{n}): \ n\in\mathbb{N}\}\cup \{(1,0)\},$$
$$H=\{(1,\frac{1}{n}): \ n\in\mathbb{N}\}\cup( S^1\times \{0\}).$$
For $\eps>0$, let $N\in\mathbb{N}$ be such that $\frac{1}{N}<\eps$. Then  $d_H(F^{kN!}(H),H)<\eps$ for all $k\in\mathbb{N}$. It follows that $H\in UR(2^F)$.
Observe also that $\lim_{n\to +\infty}d_H(F^n(D),F^n(H))=0$. Therefore $D\notin R(2^F)$.

}
\end{exe}

\textit{Acknowledgements.} This work was supported by the research unit:
``Dynamical systems and their applications'' (UR17ES21), Ministry of Higher Education and Scientific Research,
Tunisia.

\bibliographystyle{amsplain}

\end{document}